\newcommand{\D}{{\mathcal D}}
\newcommand{\R}[0]{\mathbb R}
\newcommand{\Ds}[0]{\mathcal D}
\newcommand{\N}[0]{\mathbb N}
\newtheorem{Th}{Theorem}[section]
\newtheorem{Lemma}{Lemma}[section]
\newtheorem{Prop}[Lemma]{Proposition}
\newtheorem{Coro}[Th]{Corollary}
\begin{document}

\title{On the well-posedness of the inviscid SQG equation}
\author{H. Inci}

\maketitle

\begin{abstract}
In this paper we consider the inviscid SQG equation on the Sobolev spaces $H^s(\R^2)$, $s > 2$. Using a geometric approach we show that for any $T > 0$ the corresponding solution map, $\theta(0) \mapsto \theta(T)$, is nowhere locally uniformly continuous.
\end{abstract}

\section{Introduction}\label{section_introduction}

The initial value problem for the inviscid SQG equation is given by
\begin{equation}\label{sqg}
\partial_t \theta + (u \cdot \nabla) \theta = 0,\quad \theta(0)=\theta_0
\end{equation}
where $\theta:\R \times \R^2 \to \R$ is a scalar function and $u$ is the velocity of the flow given by
\[
 u=\left(\begin{array}{c} u_1 \\ u_2 \end{array}\right) = \left( \begin{array}{c} -\mathcal R_2 \theta \\ \mathcal R_1 \theta \end{array}\right)
\]
Here we denote by $\mathcal R_1, \mathcal R_2$ the Riesz transforms
\[
 \mathcal R_k = \partial_k (-\Delta)^{-1/2},\quad k=1,2
\]
Our main interest in this equation is because of its similarities with the incompressible Euler equation -- take a look at \cite{cmt1,cmt2} for this relation and the physics of \eqref{sqg}.\\
Because of the special structure of $u$, the flow is incompressible. One can prove local well-posedness of \eqref{sqg} in $H^s(\R^2)$ for $s>2$ using the same techniques as for the incompressible Euler equation -- see e.g. \cite{majda}. We will establish this using a geometric approach.

\begin{Th}\label{lwp}
The inviscid SQG equation is locally well-posed in the Sobolev spaces $H^s(\R^2), s > 2$.
\end{Th}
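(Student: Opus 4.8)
The plan is to recast the SQG equation as an ODE on a group of diffeomorphisms, exactly as one does for Euler, and then invoke the standard local existence theory for ODEs on Banach (or Hilbert) manifolds. The key observation is that since the flow is incompressible, the scalar $\theta$ is transported by the flow map: if $\Phi_t$ denotes the flow of the (time-dependent) velocity field $u$, then $\theta(t) = \theta_0 \circ \Phi_t^{-1}$, so $\theta$ is determined by $\Phi_t$ and the fixed initial datum $\theta_0$. I would therefore work on the group $\Ds^s(\R^2)$ of Sobolev $H^s$ diffeomorphisms of $\R^2$ (those $\varphi$ with $\varphi - \id \in H^s$), which is known to be a Hilbert manifold and a topological group for $s > n/2 + 1 = 2$. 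The velocity is recovered from $\theta$ by $u = (-\mathcal R_2 \theta, \mathcal R_1 \theta)$, and the Riesz transforms $\mathcal R_k = \partial_k(-\Delta)^{-1/2}$ are bounded operators of order zero on $L^2$-based Sobolev spaces, hence map $H^s \to H^s$ continuously.

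The central step is to rewrite \eqref{sqg} as a first-order ODE for the pair $(\Phi_t, v_t)$ where $v_t = \dot\Phi_t$, or more precisely to express the equation in Lagrangian coordinates so that the right-hand side becomes an autonomous, or at least smooth, vector field on the tangent bundle of $\Ds^s(\R^2)$. Concretely, differentiating $\Phi_t$ in time gives $\dot\Phi_t = u(t) \circ \Phi_t$, and since $u(t) = K(\theta_0 \circ \Phi_t^{-1})$ for the fixed linear operator $K\theta := (-\mathcal R_2\theta, \mathcal R_1\theta)$, the whole right-hand side can be written as a map depending only on $\Phi_t$ (and the frozen datum $\theta_0$). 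I would then define the vector field $F(\varphi) := \bigl(K(\theta_0 \circ \varphi^{-1})\bigr) \circ \varphi$ on $\Ds^s(\R^2)$ and show the SQG flow is the integral curve of $F$ through $\id$. Establishing that $F$ is a \emph{smooth} (or at least Lipschitz) map $\Ds^s \to H^s$ is what makes the Picard--Lindelöf theorem applicable and yields a unique local solution depending smoothly on the initial diffeomorphism, from which well-posedness of the $\theta$-problem follows by the correspondence $\theta(t) = \theta_0 \circ \Phi_t^{-1}$.

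The main obstacle, and the heart of the proof, is precisely this smoothness of $F$. The difficulty is the classical \emph{loss of derivatives}: the composition map $(\varphi,\theta) \mapsto \theta\circ\varphi$ is continuous on $\Ds^s \times H^s$ but is \emph{not} smooth, because differentiating in $\varphi$ produces a factor of $\nabla\theta$, costing one derivative. The standard remedy — which I expect the author to follow — is to absorb this loss into the structure of the right-hand side: one conjugates the operator $K$ by $\varphi$, writing $F(\varphi) = (R_\varphi \circ K \circ R_{\varphi}^{-1})$ composed appropriately, and shows that although each factor loses a derivative, the composite $R_\varphi \,\nabla\,(\,\cdot\,) \circ \varphi^{-1}$ together with the smoothing order-$(-1)$ piece of $(-\Delta)^{-1/2}$ gains one back, so that the net operator maps $H^s \to H^s$ smoothly in $\varphi$. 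Technically this means analyzing the $\varphi$-dependence of the conjugated Riesz transforms $R_\varphi \mathcal R_k R_\varphi^{-1}$ as pseudodifferential (or Fourier-integral) operators and verifying that their symbols depend smoothly on $\varphi \in \Ds^s$; the zeroth-order nature of the Riesz transforms is exactly what keeps this estimate closed in $H^s$.

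Once smoothness of $F$ is in hand, the remaining steps are routine: apply the existence-uniqueness-and-smooth-dependence theorem for ODEs on Hilbert manifolds to obtain $\Phi_t$ on a time interval $[0,T_0)$ depending only on $\|\theta_0\|_{H^s}$, then recover $\theta(t) = \theta_0 \circ \Phi_t^{-1} \in H^s$ and check that the solution map $\theta_0 \mapsto \theta(t)$ is continuous — indeed as smooth as the composition map allows — thereby establishing local well-posedness in the sense required.
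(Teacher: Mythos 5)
Your proposal is correct, but it takes a genuinely different route from the paper's. You close a \emph{first-order} ODE for the flow map alone, $\dot\varphi=\bigl(-\mathcal R_2(\theta_0\circ\varphi^{-1}),\,\mathcal R_1(\theta_0\circ\varphi^{-1})\bigr)\circ\varphi$, by freezing the transported datum $\theta_0$ as a parameter; smoothness of this vector field is exactly analyticity of $\varphi\mapsto R_\varphi\mathcal R_k R_\varphi^{-1}$ evaluated at the fixed $\theta_0$ (the paper's Corollary \ref{coro_riesz}), after which Picard--Lindel\"of, uniqueness (any $H^s$ solution's flow satisfies your ODE, by the chain rule and $H^s\hookrightarrow C^1$), and continuity of $\theta_0\mapsto\theta_0\circ\varphi(t;\theta_0)^{-1}$ give Theorem \ref{lwp}. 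The paper instead first derives the velocity system \eqref{alternative}, $\partial_t u+(u\cdot\nabla)u=B(u,u)$, and solves the \emph{second-order}, datum-independent geodesic equation \eqref{lagrangian} for $(\varphi,v)$; that requires the full analyticity of the quadratic right-hand side (Proposition \ref{prop_analytic}) plus equivalence bookkeeping that your route skips entirely --- the divergence-preservation Lemma \ref{lemma_div}, Proposition \ref{prop_equivalence}, and the two lemmas of Section \ref{section_lwp}. What the paper's heavier formulation buys is the exponential map: $\varphi(t)=\exp(tu_0)$ with analytic dependence on the initial \emph{velocity}, whose Taylor expansion and second-derivative bounds are precisely what the proof of Theorem \ref{nonuniform} in Section \ref{section_nonuniform} consumes (your parametrized flow $\theta_0\mapsto\varphi(1;\theta_0)$ coincides with the map $\widetilde\exp$ used there, so the later argument could in principle be rebuilt on your formulation as well). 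What your formulation buys is a shorter, cleaner path to well-posedness itself: transport of $\theta$ is exact regardless of incompressibility, so no auxiliary $u$-system and no conservation lemma are needed. The one caveat is that the step you correctly call the heart of the proof --- smoothness of the conjugated Riesz transforms, with the order-$(-1)$ piece of $(-\Delta)^{-1/2}$ compensating the derivative lost to composition --- is not routine pseudodifferential bookkeeping; making it rigorous is the paper's entire appendix (Lemmas \ref{lemma_laplacian}, \ref{first_order_kernel} and Corollary \ref{coro_riesz}), so your proposal is complete only modulo that analysis.
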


In the following we denote for $s$ fixed and $T > 0$, the set $U_T \subseteq H^s(\R^2)$ to be the set of those initial values $\theta_0 \in H^s(\R^2)$ for which the solution of \eqref{sqg} exists longer than time $T$. With this our main result reads as

\begin{Th}\label{nonuniform}
The solution map $U_T \to H^s(\R^2), \theta_0 \mapsto \theta(T)$ is nowhere locally uniformly continuous.
\end{Th}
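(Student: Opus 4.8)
The plan is to exploit the Lagrangian factorization of the solution furnished by the geometric proof of Theorem~\ref{lwp}. Writing $\varphi_t=\varphi_t(\theta_0)\in\D^s(\R^2)$ for the flow of the velocity field, i.e. the solution of $\partial_t\varphi_t=u_t\circ\varphi_t$, $\varphi_0=\id$, with $u_t=\nabla^\perp(-\Delta)^{-1/2}(\theta_0\circ\varphi_t^{-1})$, the unique solution of \eqref{sqg} is
\[
\theta(t)=\theta_0\circ\varphi_t^{-1}.
\]
The decisive structural input, and exactly what the geometric approach buys us, is that in Lagrangian coordinates the evolution is governed by an ODE with a smooth right-hand side on $\D^s(\R^2)$, so that the data-to-flow map $\theta_0\mapsto\varphi_T(\theta_0)$ is $C^1$, in particular locally Lipschitz, from $U_T$ into $\D^s(\R^2)\hookrightarrow C^1$. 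By contrast, recovering $\theta(T)$ from $\varphi_T$ costs one derivative through the composition $\theta_0\circ\varphi_T^{-1}$, and this loss is the sole source of the failure of uniform continuity that we exploit.

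To prove that $\Phi_T\colon\theta_0\mapsto\theta(T)$ is nowhere locally uniformly continuous, I fix an arbitrary $\theta_0\in U_T$ and radius $r>0$ and construct two sequences inside $B(\theta_0,r)\subseteq H^s$ whose data difference tends to $0$ while the solutions at time $T$ stay a fixed distance apart. The building block is a high-frequency wave packet $h_n(x)=c_0\,n^{-s}\cos(nx_1)\,\psi(x-x_0)$ with $\psi\in C_c^\infty$ and $c_0$ small, so that $\|h_n\|_{H^s}\sim c_0$ uniformly in $n$ and $h_n$ is concentrated near a chosen point $x_0$. I set
\[
\theta_0^{1,n}=\theta_0+h_n,\qquad \theta_0^{2,n}=\theta_0+\delta_n\,w+h_n,
\]
where $w$ is a fixed low-frequency function and $\delta_n\to0$ is a scale to be tuned. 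Both sequences lie in $B(\theta_0,r)$ once $c_0$ and $\delta_n$ are small, and the data difference is $\|\theta_0^{1,n}-\theta_0^{2,n}\|_{H^s}=\delta_n\|w\|_{H^s}\to0$.

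The two flows $\varphi_T^{1,n},\varphi_T^{2,n}$ differ precisely because of the summand $\delta_n w$. Choosing $w$ so that the induced velocity perturbation $\nabla^\perp(-\Delta)^{-1/2}w$ is nonzero near $x_0$, the two inverse flows are displaced relative to each other there by an amount $d_n\sim\delta_n$. Since the high-frequency factor is common to both data, the dominant contribution to $\theta^{1,n}(T)-\theta^{2,n}(T)$ is the difference of advected packets $h_n\circ(\varphi_T^{1,n})^{-1}-h_n\circ(\varphi_T^{2,n})^{-1}$; the remaining terms (the advected $\theta_0$ and the advected $\delta_n w$) carry no frequency amplification, and since the relevant reparametrization $(\varphi_T^{1,n})^{-1}\circ\varphi_T^{2,n}$ converges to $\id$ they tend to $0$ in $H^s$ by mere continuity of composition. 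A relative displacement $d_n$ turns into a phase shift of size $\sim n\,d_n$ in the two packets. Tuning $\delta_n$ (and passing to a subsequence) so that $n\,d_n$ stays bounded away from $0$ modulo $2\pi$, a direct stationary-phase/decorrelation estimate shows that the advected packets are $H^s$-separated by a fixed multiple of $c_0$, whence $\liminf_n\|\theta^{1,n}(T)-\theta^{2,n}(T)\|_{H^s}\gtrsim c_0>0$. As $\theta_0$ and $r$ were arbitrary, this contradicts local uniform continuity everywhere.

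I expect the main obstacle to be the quantitative handling of the flow displacement: one must establish a genuine \emph{lower} bound $d_n\gtrsim\delta_n$ near $x_0$, equivalently that the directional derivative of the Lagrangian flow map in the direction $w$ does not degenerate there, which is where the $C^1$-structure from Theorem~\ref{lwp} together with the nondegeneracy of $\nabla^\perp(-\Delta)^{-1/2}$ enters, and then convert this displacement into the $H^s$ lower bound on the packet difference uniformly in $n$ while controlling the lower-order remainders. The decorrelation estimate — showing that the cross term in $\|h_n\circ(\varphi_T^{1,n})^{-1}-h_n\circ(\varphi_T^{2,n})^{-1}\|_{H^s}^2$ is negligible because of the non-converging phase — is the technical heart, and it is the only place where the one-derivative loss inherent in $\theta_0\circ\varphi_T^{-1}$ is actually felt.
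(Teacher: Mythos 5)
Your proposal follows the same overall skeleton as the paper (the Lagrangian factorization $\theta(T)=\theta_0\circ\varphi_T^{-1}$, a fixed-size perturbation riding on the data plus an $o(1)$ perturbation that displaces the flow), and your high-frequency packet with phase decorrelation is a genuinely different lower-bound mechanism from the paper's shrinking-support hump combined with the disjoint-support inequalities \eqref{ineq1}, \eqref{ineq2}. However, the two steps you defer are not mere technicalities: as proposed they would fail. The first is the displacement lower bound $d_n\gtrsim\delta_n$. You propose to obtain it from the $C^1$-structure of the data-to-flow map together with nondegeneracy of $\nabla^\perp(-\Delta)^{-1/2}$, but that argument only works at the base point $\theta_0=0$, where the derivative of the exponential map is explicitly the identity. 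At an arbitrary $\theta_0\in U_T$ the operator $d_{\theta_0}\varphi_T$ is not explicit, and $C^1$ (or even $C^\infty$) regularity cannot rule out that $(d_{\theta_0}\varphi_T\,w)(x_0)$ vanishes for all $w$: a smooth map whose derivative is nondegenerate at $0$ can perfectly well have degenerate derivative on an open set elsewhere. This is exactly where the paper needs real analyticity of $\exp$ (Proposition \ref{prop_analytic}): in Lemma \ref{lemma_dense} the \emph{analytic} function $t\mapsto(d_{t\theta_0}\widetilde\exp(v))(x^\ast)$ is nonzero at $t=0$ and hence nonzero along a sequence $t_n\uparrow1$, which produces a dense set of base points where nondegeneracy holds; it is then transferred to the shifted base points $\theta_0+w^{(n)}$ by the quantitative second-order Taylor bounds (the estimates on $I_1,I_2,I_3$ and condition \eqref{condition}). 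Your scheme needs the same two ingredients -- note in particular that your packets $h_n$ have norm $\sim c_0$ \emph{not} tending to zero, so you need the nondegeneracy uniformly at the moving base points $\theta_0+h_n$, not just at $\theta_0$ -- and neither is supplied by the tools you invoke.

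The second gap is your dismissal of the remaining terms: you claim $\theta_0\circ(\varphi_T^{1,n})^{-1}-\theta_0\circ(\varphi_T^{2,n})^{-1}\to0$ in $H^s$ ``by mere continuity of composition'' because $(\varphi_T^{1,n})^{-1}\circ\varphi_T^{2,n}\to\id$. But the flows $\varphi_T^{i,n}$ do \emph{not} converge as $n\to\infty$; they form a bounded sequence whose difference tends to zero, and composition and inversion on $\D^s(\R^2)$ are continuous but not uniformly continuous on bounded sets -- this failure is precisely the phenomenon the theorem asserts for the solution map, so appealing to it here is essentially circular. The paper avoids the issue structurally: it takes $\theta_0$ compactly supported (a dense class) and places the hump at distance $\geq 2$ from $\operatorname{supp}\theta_0$, so that after the flow the advected-$\theta_0$ difference and the advected-hump difference live in disjoint fixed compacta $K_1,K_2$, and inequality \eqref{ineq1} shows the former -- whatever its size -- cannot cancel the latter. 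Your argument needs either this localization device or an actual proof of smallness (e.g., smoothing $\theta_0$ plus estimates uniform in $n$), which you do not give. Finally, even granting the displacement bound, your phase tuning requires the asymptotics $d_n=A\delta_n+o(\delta_n)$ with identified constant $A>0$ (a lower bound alone does not let you keep $nd_n$ away from $2\pi\mathbb{Z}$), and the decorrelation estimate must handle compositions of frequency-$n$ packets with diffeomorphisms that are only $H^s$-regular; the paper's support-separation argument is designed precisely to avoid this oscillatory analysis.
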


The same result was established in \cite{euler} for the incompressible Euler equation and in \cite{b_family} for the Holm-Staley $b$-family of equations. To establish Theorem \ref{nonuniform} we will use the same techniques as in \cite{euler,b_family}. The idea is to use some sort of ''gliding hump''. If we denote by $\varphi$ the flow of $u$, i.e.
\[
 \partial_t \varphi = u \circ \varphi,\quad \varphi(0)=\mbox{id}
\]
then we have
\[
 \theta(T) \circ \varphi(T) = \theta_0 \quad \mbox{or} \quad \theta(T)=\theta_0 \circ \varphi(T)^{-1}
\]
which is the key to produce the ''gliding hump''. To accomplish that one needs to control $\varphi$. Here the geometric formulation comes into play, which is nothing other than the formulation of \eqref{sqg} in the Lagrangian variable $\varphi$.

\section{Geometric formulation}\label{section_geometricformulation}

In this section we describe the equation \eqref{sqg} in a geometric way. The principle is not new -- see e.g. \cite{arnold,ebin_marsden}. It works quite for a lot of equations. For the incompressible Euler equation \cite{euler}, for the Holm-Staley $b$-family of equations \cite{b_family}, for the Burgers equation and so on. All these equations can be written in the form
\[
 \partial_t u + (u \cdot \nabla) u = F(u)
\]
where on the right hand side there is no loss of regularity. Now consider the flow map of $u$ which we denote by $\varphi$. Using $\varphi_t = u \circ \varphi$ and taking the derivative of this expression one gets
\[
 \varphi_{tt} = u_t \circ \varphi + [(u \cdot \nabla) u] \circ \varphi
\]
Hence $\varphi_{tt}=F(\varphi_t \circ \varphi^{-1}) \circ \varphi$. This is a second order equation in $\varphi$. Let us establish this for equation \eqref{sqg}. Applying $-\mathcal R_2$ to \eqref{sqg} we get
\[
 \partial_t u_1 + (u \cdot \nabla) u_1 = \mathcal R_2((u \cdot \nabla)\theta)- (u \cdot \nabla) \mathcal R_2 \theta = [u \cdot \nabla, -\mathcal R_2]\theta
\]
where $[A,B]=AB-BA$ denotes the commutator of the operators $A,B$. Similarly applying $\mathcal R_1$ to \eqref{sqg} we get
\[
 \partial_t u_2 + (u \cdot \nabla) u_2 = [u \cdot \nabla,\mathcal R_1] \theta
\]
Replacing $\theta=\mathcal R_2 u_1 - \mathcal R_1 u_2$ and recasting both equations we get
\begin{equation}\label{alternative}
\partial_t u + (u \cdot \nabla) u= \left( \begin{array}{c} \left[u \cdot \nabla,-\mathcal R_2\right](\mathcal R_2 u_1 - \mathcal R_1 u_2) \\ \left[u \cdot \nabla,\mathcal R_1\right](\mathcal R_2 u_1 - \mathcal R_1 u_2) \end{array}\right)=:B(u,u)
\end{equation}
with $B$ the quadratic expression in $u$ on the right. Introducing the variables $(\varphi,v)$ where $\varphi$ is the flow map of $u$ and $v$ is $\varphi_t$ we can rewrite \eqref{alternative} as an equation on $\D^s(\R^2) \times H^s(\R^2;\R^2)$
\begin{equation}\label{lagrangian}
 \partial_t \left( \begin{array}{c} \varphi \\ v \end{array}\right) = \left( \begin{array}{c} v \\ B(v \circ \varphi^{-1},v \circ \varphi^{-1}) \circ \varphi \end{array} \right)
\end{equation}
where the function space $\D^s(\R^2)$ is defined for $s > 2$ as
\[
 \D^s(\R^2)=\{ \varphi:\R^2 \to \R^2 \;|\; \varphi - \mbox{id} \in H^s(\R^2;\R^2), \det(d_x \varphi) > 0 \quad \forall x \in \R^2 \}
\]
This space consists of diffeomorphisms of $\R^2$ which are perturbations of the identity map. It is connected and has a differential structure by considering it as an open subset of $H^s(\R^2;\R^2)$. Furthermore it is a topological group under composition of maps. For details one can consult \cite{composition}. Note that the quadratic nature of $B$ makes \eqref{lagrangian} to a geodesic equation on $\D^s(\R^2)$. One of the main difficulties is to prove the regularity of the equation \eqref{lagrangian}. We have

\begin{Prop}\label{prop_analytic}
Let $s > 2$. Then the map
\[
 \D^s(\R^2) \times H^s(\R^2;\R^2) \to H^s(\R^2;\R^2),\quad (\varphi,v) \mapsto B(v \circ \varphi^{-1},v \circ \varphi^{-1}) \circ \varphi
\]
is real analytic.
\end{Prop}

The proof is in the Appendix. An immediate consequence of this proposition is that we get by Picard-Lindel\"of local solutions to \eqref{lagrangian} which are unique. In the following we show that \eqref{alternative} is an equivalent formulation of \eqref{sqg} and in the next section the equivalence of \eqref{lagrangian} and \eqref{alternative}. But first we make for \eqref{sqg} the notion of solution precise. For $s > 2$ we say that $\theta \in C([0,T];H^s(\R^2))$ is a solution to \eqref{sqg} on $[0,T]$ if we have
\[
 \theta(t) = \theta_0 + \int_0^t -(u \cdot \nabla) \theta \;ds \quad \forall \; 0 \leq t \leq T
\]
as an equality in $H^{s-1}$. Note that $u=(-\mathcal R_2 \theta,\mathcal R_1 \theta) \in C([0,T];H^s(\R^2;\R^2))$ and so the integral lies in $C([0,T];H^{s-1}(\R^2;\R^2))$ by the Banach algebra property of $H^{s-1}$.

\begin{Lemma}\label{lemma_div}
Let $s > 2$ and $T > 0$. For $u \in C([0,T];H^s(\R^2;\R^2))$ a solution to \eqref{alternative} with divergence-free initial value, i.e. $\operatorname{div} u(0) = 0$ we have
\[
 \operatorname{div} u(t) = 0 \quad \forall \; 0 \leq t \leq T
\]
\end{Lemma}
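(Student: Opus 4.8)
The plan is to show that the scalar quantity $\operatorname{div} u$ satisfies a linear transport equation along the flow of $u$, so that if it vanishes initially it vanishes for all time. The starting point is the equation \eqref{alternative}, namely $\partial_t u + (u \cdot \nabla) u = B(u,u)$. Taking the divergence of both sides and writing $w := \operatorname{div} u$, I would compute the left-hand side as
\[
 \operatorname{div}\big(\partial_t u + (u\cdot\nabla)u\big) = \partial_t w + (u\cdot\nabla)w + \sum_{i,j} (\partial_i u_j)(\partial_j u_i).
\]
The key algebraic observation is that the quadratic term $\sum_{i,j}(\partial_i u_j)(\partial_j u_i)$ can itself be rewritten in terms of $w = \partial_1 u_1 + \partial_2 u_2$ and the off-diagonal derivatives; in two dimensions one checks that it equals $w^2 - 2(\partial_1 u_1)(\partial_2 u_2) + 2(\partial_1 u_2)(\partial_2 u_1)$, so it is not purely a multiple of $w$ and I cannot discard it naively. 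The real simplification must come from the right-hand side.

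Next I would compute $\operatorname{div} B(u,u)$ and show it cancels precisely the unwanted quadratic remainder, leaving a transport equation $\partial_t w + (u\cdot\nabla)w = c(x,t)\,w$ for some coefficient $c$ that is bounded in terms of $u \in H^s$. This is where the specific structure of $B$ in \eqref{alternative} is essential: recall that $B$ arose by applying the Riesz transforms $-\mathcal R_2$ and $\mathcal R_1$ to \eqref{sqg}, so $u_1 = -\mathcal R_2\theta$ and $u_2 = \mathcal R_1\theta$ by construction. Therefore $\operatorname{div} u = \partial_1 u_1 + \partial_2 u_2 = -\partial_1\mathcal R_2\theta + \partial_2\mathcal R_1\theta = 0$ \emph{identically} whenever $u$ is of the Riesz form with a genuine potential $\theta$. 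The substance of the lemma is thus to verify that a solution of \eqref{alternative} with divergence-free data keeps the divergence-free structure dynamically, i.e. that the evolution preserves the linear constraint $\operatorname{div} u = 0$; the computation of $\operatorname{div} B(u,u)$ exploits that $\mathcal R_k = \partial_k(-\Delta)^{-1/2}$ commutes with derivatives and that the commutators in $B$ have a divergence that reassembles into the missing quadratic terms.

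Concretely, I would carry out the steps in this order. First, establish that $w = \operatorname{div} u$ lies in $C([0,T];H^{s-1})$ and, using the integral formulation of \eqref{alternative}, that it satisfies its evolution equation in $H^{s-2}$, so all manipulations are justified in a Sobolev space where $H^{s-2}$ is at least a reasonable distribution space and the products make sense by the algebra/multiplier estimates. Second, take the divergence of \eqref{alternative} and record the left-hand side as above. Third, compute $\operatorname{div} B(u,u)$ and verify the cancellation, arriving at a closed linear transport equation for $w$ with an $L^\infty$-in-time, $H^{s-1}$-valued (hence bounded) coefficient. Fourth, invoke uniqueness for this linear transport equation: since $w(0)=0$ and $w\equiv 0$ is a solution, the unique solution is $w\equiv 0$; equivalently, propagate along the flow $\varphi$ of $u$, where $w\circ\varphi$ solves an ODE $\tfrac{d}{dt}(w\circ\varphi)=c\,(w\circ\varphi)$ pointwise and a Gronwall argument forces $w\circ\varphi\equiv 0$.

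The main obstacle I anticipate is the third step, the exact computation of $\operatorname{div} B(u,u)$ and the demonstration that it cancels $\sum_{i,j}(\partial_i u_j)(\partial_j u_i)$ up to a multiple of $w$. This requires handling the commutators $[u\cdot\nabla,\mathcal R_k]$ carefully, keeping track of which terms involve $w$ and which are the genuinely quadratic non-$w$ pieces, and confirming that the Riesz-transform structure produces precisely the compensating terms. A regularity subtlety also hides here: $\operatorname{div} B(u,u)$ a priori involves two derivatives of the Riesz-transformed quantities, so one must check — as guaranteed in spirit by Proposition \ref{prop_analytic} and the no-loss-of-regularity remark — that the resulting coefficient $c$ genuinely lands in a space where the transport/Gronwall argument closes without losing a derivative on $w$.
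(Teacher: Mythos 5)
Your overall strategy---derive a closed linear equation for a divergence-type quantity and conclude by Gronwall/uniqueness---is the paper's strategy in spirit, but the two steps you defer are exactly where the proof lives, and the form you anticipate for them is wrong. The term $B(u,u)$ is built from the commutators $[u\cdot\nabla,\mathcal R_k]$, which are \emph{nonlocal} operators, so $\operatorname{div}B(u,u)$ is a nonlocal quadratic expression in $u$; it cannot pointwise cancel the local term $\sum_{i,j}(\partial_iu_j)(\partial_ju_i)$ so as to leave a local multiple $c(x,t)\,w$. Consequently no transport equation $\partial_t w+(u\cdot\nabla)w=c\,w$ emerges, and your fourth step collapses with it: for a nonlocal equation there is no flow along which $w$ satisfies a pointwise ODE, and uniqueness for local transport equations does not apply. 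What is true, and what the paper proves by expanding the commutators term by term, is a nonlocal linear identity for the order-zero quantity
\[
 \Phi=\mathcal R_1u_1+\mathcal R_2u_2=(-\Delta)^{-1/2}\operatorname{div}u,
\]
namely
\[
 \partial_t\Phi=(\mathcal R_1u_1\,\mathcal R_1\partial_1+\mathcal R_1u_2\,\mathcal R_1\partial_2+\mathcal R_2u_1\,\mathcal R_2\partial_1+\mathcal R_2u_2\,\mathcal R_2\partial_2)\Phi,
\]
where the local quadratic term is absorbed into the same Riesz-transform bookkeeping via the identity $-\mathcal R_1^2-\mathcal R_2^2=\mbox{id}$. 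This equation is linear in $\Phi$, but its coefficients multiply the pseudodifferential operators $\mathcal R_k\partial_j$, not the vector field $u\cdot\nabla$.

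This changes what you need in the final step. The paper closes with an $L^2$ energy estimate, and that estimate closes only because of structure your plan never isolates: each $\mathcal R_k\partial_j$ is self-adjoint (both $\mathcal R_k$ and $\partial_j$ are skew-adjoint Fourier multipliers), so in $\langle \mathcal R_ku_j\,\mathcal R_k\partial_j\Phi,\Phi\rangle_{L^2}$ one can integrate by parts to throw the operator back across the pairing; this reproduces the original pairing up to terms in which the derivative falls on the coefficients $\mathcal R_ku_j$, and those are bounded in $L^\infty$ by the embedding $H^{s-1}\hookrightarrow L^\infty$. This yields $\partial_t\|\Phi\|_{L^2}^2\le C\|\Phi\|_{L^2}^2$, hence $\Phi\equiv0$ by Gronwall, hence $\operatorname{div}u=(-\Delta)^{1/2}\Phi\equiv0$. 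Note also that working with $\Phi$ rather than $w=\operatorname{div}u$ is not cosmetic: conjugating the displayed equation by $(-\Delta)^{1/2}$ to get an equation for $w$ creates a derivative mismatch in the $L^2$ pairing that you could only repair with commutator estimates for $[(-\Delta)^{1/2},a]$, whereas forming the constraint with the order-zero Riesz transforms lets an elementary integration by parts suffice. So to complete your proof you must (i) actually carry out the commutator expansion showing linearity in $\Phi$ (the bulk of the paper's proof---your ``main obstacle'' is indeed the crux, and its resolution is nonlocal, not a pointwise cancellation), and (ii) replace transport uniqueness by the self-adjointness/energy argument just described.
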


\begin{proof}
We denote $u_0=u(0)$. By \eqref{alternative} we have
\[
 u(t) = u_0 + \int_0^t B(u,u) - (u \cdot \nabla) u \; ds \quad \forall \; 0 \leq t \leq T
\]
Let $\Phi=\mathcal R_1 u_1 + \mathcal R_2 u_2$. Note that as $u_0$ is divergence free, we have $\Phi(0)=0$. Applying $\mathcal R_1$ to the first component and $\mathcal R_2$ to the second component one gets
\begin{eqnarray*}
\partial_t \Phi &=& \mathcal R_1([u \cdot \nabla,-\mathcal R_2](\mathcal R_2 u_1 - \mathcal R_1 u_2)) + \mathcal R_2([u \cdot \nabla,\mathcal R_1](\mathcal R_2 u_1- \mathcal R_1 u_2))\\
&& - \mathcal R_1((u \cdot \nabla)u_1)-\mathcal R_2((u \cdot \nabla) u_2)
\end{eqnarray*}
We consider the terms seperately. We have
\begin{eqnarray*}
\mathcal R_1([u \cdot \nabla,-\mathcal R_2](\mathcal R_2 u_1 - \mathcal R_1 u_2)) &=& -\mathcal R_1 u_1 \mathcal R_2^2 \partial_1 u_1 + \mathcal R_1 \mathcal R_2 u_1 \mathcal R_2 \partial_1 u_1 \\
&& -\mathcal R_1 u_2 \mathcal R_2^2 \partial_2 u_1 + \mathcal R_1 \mathcal R_2 u_2 \mathcal R_2 \partial_2 u_1 \\
&& +\mathcal R_1 u_1 \mathcal R_1 \mathcal R_2 \partial_1 u_2 - \mathcal R_1 \mathcal R_2 u_1 \mathcal R_1 \partial_1 u_2 \\
&& +\mathcal R_1 u_2 \mathcal R_1 \mathcal R_2 \partial_2 u_2 - \mathcal R_1 \mathcal R_2 u_2 \mathcal R_1 \partial_2 u_2
\end{eqnarray*}
Similarly we have
\begin{eqnarray*}
\mathcal R_2([u \cdot \nabla,\mathcal R_1](\mathcal R_2 u_1 - \mathcal R_1 u_2)) &=& \mathcal R_2 u_1 \mathcal R_1 \mathcal R_2 \partial_1 u_1 - \mathcal R_1 \mathcal R_2 u_1 \mathcal R_2 \partial_1 u_1 \\
&& + \mathcal R_2 u_2 \mathcal R_1 \mathcal R_2 \partial_2 u_1 - \mathcal R_1 \mathcal R_2 u_2 \mathcal R_2 \partial_2 u_1 \\
&& - \mathcal R_2 u_1 \mathcal R_1^2 \partial_1 u_2 + \mathcal R_1 \mathcal R_2 u_1 \mathcal R_1 \partial_1 u_2 \\
&& - \mathcal R_2 u_2 \mathcal R_1^2 \partial_2 u_2 + \mathcal R_1 \mathcal R_2 u_2 \mathcal R_1 \partial_2 u_2
\end{eqnarray*}
Finally we have
\begin{eqnarray*}
-\mathcal R_1((u \cdot \nabla)u_1)-\mathcal R_2((u \cdot \nabla)u_2) &=& -\mathcal R_1 u_1 \partial_1 u_1 - \mathcal R_1 u_2 \partial_2 u_1 \\
&& -\mathcal R_2 u_1 \partial_1 u_2 - \mathcal R_2 u_2 \partial_2 u_2
\end{eqnarray*}
Using the identity $-\mathcal R_1^2-\mathcal R_2^2 = \mbox{id}$ we rewrite this as
\begin{multline*}
-\mathcal R_1((u \cdot \nabla)u_1)-\mathcal R_2((u \cdot \nabla)u_2) = \\
-\mathcal R_1 u_1( -\mathcal R_1^2-\mathcal R_2^2) \partial_1 u_1 - \mathcal R_1 u_2 (-\mathcal R_1^2-\mathcal R_2^2) \partial_2 u_1 \\
-\mathcal R_2 u_1(-\mathcal R_1^2-\mathcal R_2^2) \partial_1 u_2 - \mathcal R_2 u_2 (-\mathcal R_1^2-\mathcal R_2^2) \partial_2 u_2
\end{multline*}
Adding up we find
\begin{eqnarray*}
\partial_t \Phi &=& \mathcal R_1 u_1 \mathcal R_1^2 \partial_1 u_1 + \mathcal R_1 u_2 \mathcal R_1^2 \partial_2 u_1 + \mathcal R_1 u_1 \mathcal R_1 \mathcal R_2 \partial_1 u_2 + \mathcal R_1 u_2 \mathcal R_1 \mathcal R_2 \partial_2 u_2\\
&& +\mathcal R_2 u_1 \mathcal R_1 \mathcal R_2 \partial_1 u_1 + \mathcal R_2 u_2 \mathcal R_1 \mathcal R_2 \partial_2 u_1 + \mathcal R_2 u_1 \mathcal R_2^2 \partial_1 u_2 + \mathcal R_2 u_2 \mathcal R_2^2 \partial_2 u_2 \\
&=& (\mathcal R_1 u_1 \mathcal R_1 \partial_1 + \mathcal R_1 u_2 \mathcal R_1 \partial_2 + \mathcal R_2 u_1 \mathcal R_2 \partial_1 + \mathcal R_2 u_2 \mathcal R_2 \partial_2)(\mathcal R_1 u_1 + \mathcal R_2 u_2) \\
&=& (\mathcal R_1 u_1 \mathcal R_1 \partial_1 + \mathcal R_1 u_2 \mathcal R_1 \partial_2 + \mathcal R_2 u_1 \mathcal R_2 \partial_1 + \mathcal R_2 u_2 \mathcal R_2 \partial_2) \Phi
\end{eqnarray*}
Hence we have
\[
 \partial_t \frac{1}{2} \langle \Phi,\Phi \rangle_{L^2}=\langle (\mathcal R_1 u_1 \mathcal R_1 \partial_1 + \mathcal R_1 u_2 \mathcal R_1 \partial_2 + \mathcal R_2 u_1 \mathcal R_2 \partial_1 + \mathcal R_2 u_2 \mathcal R_2 \partial_2) \Phi,\Phi \rangle_{L^2}
\]
Using integration by parts the righthand side is
\begin{eqnarray*}
&& \langle (\mathcal R_1 u_1 \mathcal R_1 \partial_1 + \mathcal R_1 u_2 \mathcal R_1 \partial_2 + \mathcal R_2 u_1 \mathcal R_2 \partial_1 + \mathcal R_2 u_2 \mathcal R_2 \partial_2) \Phi,\Phi \rangle_{L^2}= \\
&& - \langle \Phi, \partial_1(\mathcal R_1 u_1 \mathcal R_1 \Phi) + \partial_2(\mathcal R_1 u_2 \mathcal R_1 \Phi) + \partial_1 (\mathcal R_2 u_1 \mathcal R_2 \Phi) + \partial_2 (\mathcal R_2 u_2 \mathcal R_2 \Phi)\rangle_{L^2} = \\
&& - \langle \Phi, \mathcal R_1 \partial_1 u_1 \mathcal R_1 \Phi + \mathcal R_1 \partial_2 u_2 \mathcal R_1 \Phi + \mathcal R_2 u_2 \mathcal R_2 \partial_1 u_1 \mathcal R_2 \Phi + \mathcal R_2 \partial_2 u_2 \mathcal R_2 \Phi \rangle_{L^2} \\
&& -\langle \Phi,(\mathcal R_1 u_1 \mathcal R_1 \partial_1 + \mathcal R_1 u_2 \mathcal R_1 \partial_2 + \mathcal R_2 u_1 \mathcal R_2 \partial_1 + \mathcal R_2 u_2 \mathcal R_2 \partial_2) \Phi \rangle_{L^2}
\end{eqnarray*}
Thus we have
\[
 \partial_t \frac{1}{2} \langle \Phi,\Phi \rangle_{L^2} = -\frac{1}{2} \langle \Phi, \mathcal R_1 \partial_1 u_1 \mathcal R_1 \Phi + \mathcal R_1 \partial_2 u_2 \mathcal R_1 \Phi + \mathcal R_2 u_2 \mathcal R_2 \partial_1 u_1 \mathcal R_2 \Phi + \mathcal R_2 \partial_2 u_2 \mathcal R_2 \Phi \rangle_{L^2} 
\]
Using the Sobolev imbedding $H^{s-1} \hookrightarrow L^\infty$ we have on $[0,T]$ therefore the estimate
\[
 \partial_t ||\Phi||_{L^2}^2 \leq C ||\Phi||_{L^2}^2
\]
As $\Phi(0)=0$ we get by Gronwalls inequality $\Phi \equiv 0$ showing that $\operatorname{div} u=0$.
\end{proof}

A consequence of Lemma \ref{lemma_div} is

\begin{Prop}\label{prop_equivalence}
Let $s > 2$ and $T > 0$. For $\theta_0 \in H^s(\R^2)$ let $\theta \in C([0,T];H^s(\R^2))$ be a solution to \eqref{sqg}. Then $u=(-\mathcal R_2 \theta,\mathcal R_1 \theta)$ is a solution to \eqref{alternative} on $[0,T]$. On the other hand if $u \in C([0,T];H^s(\R^2;\R^2))$ is a solution to \eqref{alternative} with initial value $u(0)=(-\mathcal R_2 \theta_0,\mathcal R_1 \theta_0)$, then $\theta=\mathcal R_2 u_1 - \mathcal R_1 u_2$ is a solution to \eqref{sqg} on $[0,T]$ with $\theta(0)=\theta_0$.
\end{Prop}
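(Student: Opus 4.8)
The plan is to treat the two implications separately, in both of them passing the bounded Fourier multipliers $\mathcal R_1,\mathcal R_2$ through the defining integral identities; since $\mathcal R_1,\mathcal R_2$ are bounded on every Sobolev space they commute with the time integral and preserve the regularity class ($H^{s-1}$, where the products are controlled by the Banach algebra property) in which these identities are posed. So at no point is there a loss of regularity, and the whole argument is algebraic manipulation of commutators plus one structural input.

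For the forward direction I would start from the integral identity defining a solution $\theta$ of \eqref{sqg}, apply $-\mathcal R_2$ to produce $u_1$ and $\mathcal R_1$ to produce $u_2$, and bring the multiplier inside the integral. The only computation is the rearrangement $\mathcal R_2((u\cdot\nabla)\theta)=[u\cdot\nabla,-\mathcal R_2]\theta-(u\cdot\nabla)u_1$ together with its analogue for the second component; substituting $\theta=\mathcal R_2 u_1-\mathcal R_1 u_2$, which is legitimate because $-\mathcal R_1^2-\mathcal R_2^2=\mbox{id}$, turns the identity into exactly the integral form of \eqref{alternative} with right-hand side $B(u,u)-(u\cdot\nabla)u$. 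This is essentially a re-reading, at the level of the integral identities, of the derivation of \eqref{alternative} already carried out in Section \ref{section_geometricformulation}, so it needs no new idea.

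The reverse direction is the substantive one, and it is here that Lemma \ref{lemma_div} is indispensable. Given a solution $u$ of \eqref{alternative} with $u(0)=(-\mathcal R_2\theta_0,\mathcal R_1\theta_0)$, I would first observe that $\operatorname{div} u(0)=0$, equivalently $\Phi(0)=\mathcal R_1 u_1(0)+\mathcal R_2 u_2(0)=0$ because the Riesz transforms commute, and then invoke Lemma \ref{lemma_div} to get $\operatorname{div} u(t)=0$, i.e. $\Phi\equiv 0$, on all of $[0,T]$. Setting $\theta=\mathcal R_2 u_1-\mathcal R_1 u_2$, the crucial point is that $\Phi\equiv 0$ is precisely what inverts this relation: using $-\mathcal R_1^2-\mathcal R_2^2=\mbox{id}$ one checks $-\mathcal R_2\theta=u_1+\mathcal R_1\Phi$ and $\mathcal R_1\theta=u_2+\mathcal R_2\Phi$, so that $\Phi\equiv 0$ forces $u_1=-\mathcal R_2\theta$ and $u_2=\mathcal R_1\theta$. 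Thus the velocity reconstructed from $\theta$ coincides with the given $u$, which is what makes $\theta$ a genuine candidate SQG solution; the initial condition $\theta(0)=\theta_0$ follows from the same identity $-\mathcal R_1^2-\mathcal R_2^2=\mbox{id}$.

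Finally, to verify that this $\theta$ solves \eqref{sqg}, I would apply $\mathcal R_2$ to the first component and $-\mathcal R_1$ to the second component of the integral form of \eqref{alternative} and add. Using $B_1=[u\cdot\nabla,-\mathcal R_2]\theta$, $B_2=[u\cdot\nabla,\mathcal R_1]\theta$ together with the just-recovered relations $u_1=-\mathcal R_2\theta$, $u_2=\mathcal R_1\theta$, the commutator contributions cancel against the transport terms and the integrand collapses, once more via $\mathcal R_1^2+\mathcal R_2^2=-\mbox{id}$, to $-(u\cdot\nabla)\theta$; this is exactly the integral identity defining a solution of \eqref{sqg}. The only genuine obstacle worth flagging is the reconstruction step in the previous paragraph: a generic solution of \eqref{alternative} need not arise from an SQG velocity, and it is solely the propagation of the divergence-free constraint established in Lemma \ref{lemma_div} that excludes this, everything else being bounded-multiplier bookkeeping.
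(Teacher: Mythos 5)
Your proposal is correct and follows essentially the same route as the paper's own proof: the forward direction is just the derivation of \eqref{alternative} from Section \ref{section_geometricformulation}, and the reverse direction hinges on Lemma \ref{lemma_div} to propagate $\operatorname{div} u = 0$, recover $u_1=-\mathcal R_2\theta$ and $u_2=\mathcal R_1\theta$ via $-\mathcal R_1^2-\mathcal R_2^2=\mathrm{id}$, and then cancel the commutator terms against the transport terms. Your explicit bookkeeping with $\Phi$ (and your sign convention of applying $\mathcal R_2$ to the first component and $-\mathcal R_1$ to the second, which yields $\partial_t\theta$ directly) is in fact slightly cleaner than the paper's, but it is the same argument.
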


\begin{proof}
The first part was already shown in the derivation of \eqref{alternative}. To show the second part we take a solution $u$ to \eqref{alternative} with $u(0)=(-\mathcal R_2 \theta_0,\mathcal R_1 \theta_0)$ on $[0,T]$. As $\operatorname{div} u(0)=0$ we have by Lemma \ref{lemma_div} for all $t \in [0,T]$
\begin{equation}\label{div0}
\mathcal R_1 u_1 + \mathcal R_2 u_2 = 0
\end{equation}
We have to show that $\theta:=\mathcal R_2 u_1 - \mathcal R_1 u_2$ is a solution to \eqref{sqg}. By \eqref{div0} we have
\[
 -\mathcal R_2 \theta = -\mathcal R_2^2 u_1 + \mathcal R_1 \mathcal R_2 u_2 = (-\mathcal R_2^2-\mathcal R_1^2) u_1 = u_1
\]
Similarly we have $\mathcal R_1 \theta = u_2$. Applying $-\mathcal R_2$ to the first equation in \eqref{alternative}, $\mathcal R_1$ to the second equation and sum up we get
\[
\partial_t \theta - \mathcal R_2((u \cdot \nabla) u_1) + \mathcal R_1((u \cdot \nabla)u_2) = -\mathcal R_2([u \cdot \nabla,-\mathcal R_2]\theta) + \mathcal R_1([u \cdot \nabla,\mathcal R_1]\theta)
\]
Simplifying we arrive at
\[
 \partial_t \theta + (u \cdot \nabla) \theta = 0
\]
\end{proof}

\section{Local well-posedness}\label{section_lwp}

The goal of this section is to establish the equivalence of \eqref{sqg} and \eqref{lagrangian}. By Proposition \ref{prop_equivalence} we just have to prove the equivalence between \eqref{alternative} and \eqref{lagrangian}.

\begin{Lemma}\label{lemma_lagrangian_to_eulerian}
Let $s > 2$ and $T > 0$. Assume that $\varphi$ is a solution of \eqref{lagrangian} on $[0,T]$ for the initial values $\varphi(0)=\mbox{id}$ and $v(0)=u_0 \in H^s(\R^2;\R^2)$. Then $u$ given by
\[
 u(t):=\varphi_t(t) \circ \varphi(t)^{-1}
\] 
is a solution to \eqref{alternative}.
\end{Lemma}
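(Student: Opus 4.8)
The plan is to set $u(t):=\varphi_t(t)\circ\varphi(t)^{-1}$, record its initial data and regularity, and then recover \eqref{alternative} by differentiating the defining relation in time. First I would unpack \eqref{lagrangian}: writing $v=\varphi_t$, the two rows read $\varphi_t=v$ and $v_t=B(v\circ\varphi^{-1},v\circ\varphi^{-1})\circ\varphi=B(u,u)\circ\varphi$. Since the solution $(\varphi,v)$ of the ODE \eqref{lagrangian} is continuous in $t$ with values in $\D^s(\R^2)\times H^s(\R^2;\R^2)$ and the right-hand side is real analytic by Proposition \ref{prop_analytic}, it follows that $v\in C^1([0,T];H^s(\R^2;\R^2))$ and $\varphi-\mbox{id}\in C^1([0,T];H^s(\R^2;\R^2))$. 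Using the continuity of composition and of the inversion map $\varphi\mapsto\varphi^{-1}$ on $\D^s(\R^2)$ (see \cite{composition}) one gets $u=v\circ\varphi^{-1}\in C([0,T];H^s(\R^2;\R^2))$, with $u(0)=u_0\circ\mbox{id}=u_0$, and by construction the first row gives $u\circ\varphi=v=\varphi_t$.

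Next I would differentiate the identity $u\circ\varphi=\varphi_t$ in $t$. The chain rule produces
\[
\varphi_{tt}=(\partial_t u)\circ\varphi+\big[(\nabla u)\circ\varphi\big]\,\varphi_t=(\partial_t u)\circ\varphi+\big[(u\cdot\nabla)u\big]\circ\varphi,
\]
where in the second equality I substituted $\varphi_t=u\circ\varphi$ and used $(\nabla u)u=(u\cdot\nabla)u$. Comparing with the second row of \eqref{lagrangian}, namely $\varphi_{tt}=v_t=B(u,u)\circ\varphi$, and composing on the right with $\varphi(t)^{-1}\in\D^s(\R^2)$ yields
\[
\partial_t u+(u\cdot\nabla)u=B(u,u).
\]
Here $(u\cdot\nabla)u$ and $B(u,u)$ both lie in $C([0,T];H^{s-1}(\R^2;\R^2))$ — the former by the Banach algebra property of $H^{s-1}$ (valid for $s>2$), the latter additionally using the boundedness of the Riesz transforms on every $H^r$ — so after integrating in $t$ the map $u$ satisfies the integral form of \eqref{alternative} in $H^{s-1}$ with $u(0)=u_0$, which is exactly what it means to be a solution.

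The step I expect to be the main obstacle is the rigorous justification of the chain rule above, since composition loses one derivative and hence $t\mapsto u(t)$ cannot be expected to be differentiable with values in $H^s$; the argument must be carried out at the level of $H^{s-1}$. Concretely, I would first differentiate $\varphi\circ\varphi^{-1}=\mbox{id}$ to obtain $\partial_t(\varphi^{-1})\in C([0,T];H^{s-1}(\R^2;\R^2))$, and then differentiate $u=v\circ\varphi^{-1}$ to get
\[
\partial_t u=v_t\circ\varphi^{-1}+\big[(\nabla v)\circ\varphi^{-1}\big]\,\partial_t(\varphi^{-1}),
\]
which lies in $H^{s-1}$ because $v_t\in H^s$ keeps full regularity under composition while the variation term only costs one derivative (as $(\nabla v)\circ\varphi^{-1}\in H^{s-1}$ and $\partial_t(\varphi^{-1})\in H^{s-1}$). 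The legitimacy of these difference-quotient computations rests on the differentiability-with-loss properties of the composition and inversion maps established in \cite{composition}. Once $u\in C^1([0,T];H^{s-1}(\R^2;\R^2))$ with this formula for $\partial_t u$ is in hand, checking that it agrees with $B(u,u)-(u\cdot\nabla)u$ is the routine composition-cancellation carried out above, and the fundamental theorem of calculus then gives the integral identity defining a solution of \eqref{alternative}.
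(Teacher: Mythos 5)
Your proposal is correct in substance and arrives at exactly the identity the paper derives -- differentiate $\varphi_t = u\circ\varphi$ in time, replace $\varphi_{tt}$ by $B(u,u)\circ\varphi$ using the second row of \eqref{lagrangian}, compose with $\varphi^{-1}$, and integrate -- but you justify the one nontrivial step, the chain rule, by a genuinely different mechanism. You differentiate $t\mapsto v(t)\circ\varphi(t)^{-1}$ as a Banach-space-valued curve, which commits you to the $C^1$-with-loss-of-one-derivative theory for the composition and inversion maps. The paper avoids that machinery entirely: since the right-hand side of \eqref{lagrangian} is analytic (Proposition \ref{prop_analytic}), the solution satisfies $\varphi\in C^\infty([0,T];\D^s(\R^2))$, and the Sobolev imbedding $H^s\hookrightarrow C^1$ then makes $\varphi$, $\varphi_t$, $\varphi^{-1}$, hence $u=\varphi_t\circ\varphi^{-1}$, classically $C^1$ jointly in $(t,x)$; the chain rule is the ordinary finite-dimensional one applied pointwise, and the pointwise identity $u_t+(u\cdot\nabla)u=B(u,u)$ is then promoted to the $H^{s-1}$-valued integral identity using only the Banach algebra property of $H^{s-1}$ and $H^{s-1}\hookrightarrow C^0$. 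So the paper needs only the \emph{continuity} of composition from \cite{composition}, while your route needs its \emph{differentiability}; what your route buys is a genuinely functional-analytic statement ($u\in C^1([0,T];H^{s-1}(\R^2;\R^2))$ with an explicit formula for $u_t$), which is more than the Lemma asks for. One caveat you should address if you keep your route: the loss-of-derivative results are usually stated in the form ``composition $H^{\sigma+r}\times\D^\sigma\to H^\sigma$ and inversion $\D^{\sigma+r}\to\D^\sigma$ are $C^r$'', and applying them at base regularity $\sigma=s-1$ presupposes that $\D^{s-1}(\R^2)$ is a diffeomorphism group, which requires $s-1>2$; in the range $2<s\leq 3$ covered by the Lemma this fails, and you need instead the variants in which the diffeomorphism keeps regularity $s$ (composition $H^s\times\D^s\to H^{s-1}$ and inversion $\D^s\to\mbox{id}+H^{s-1}(\R^2;\R^2)$ are $C^1$ for every $s>2$). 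These variants are true and provable by the same techniques (and are the ones effectively used in \cite{thesis}), but they must be stated or proved precisely rather than obtained by substituting $\sigma=s-1$; alternatively, adopting the paper's pointwise argument makes the issue disappear.
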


\begin{proof}
It is to prove that we have
\[
 u(t) = u_0 + \int_0^t B(u(s),u(s)) - (u(s) \cdot \nabla) u(s) \;ds \quad \forall \; 0 \leq t \leq T
\]
Note that by Proposition \ref{prop_analytic} we have $\varphi \in C^\infty([0,T];\D^s(\R^2))$. Therefore by the properties of the composition (see \cite{composition}) $u \in C([0,T];H^s(\R^2;\R^2))$. By the Sobolev imbedding $H^s \hookrightarrow C^1$ we also have $u=\varphi_t \circ \varphi^{-1} \in C^1([0,T] \times \R^2;\R^2)$. Thus we have pointwise
\[
 \varphi_{tt} = (u_t + (u \cdot \nabla) u ) \circ \varphi
\]
As $\varphi$ is a solution to \eqref{lagrangian} we conclude pointwise
\[
 (u_t + (u \cdot \nabla) u) \circ \varphi = B(u,u) \circ \varphi
\]
or $u_t + (u \cdot \nabla) u = B(u,u)$. Rewriting this we get
\[
 u(t) = u_0 + \int_0^t B(u(s),u(s))-(u(s) \cdot \nabla) u(s) \;ds \quad \forall \; 0 \leq t \leq T
\]
By the Banach algebra property of $H^{s-1}$ and the imbedding $H^{s-1} \hookrightarrow C^0$ the integral is also an identity for $H^{s-1}$ functions.
\end{proof}

The reverse is

\begin{Lemma}\label{lemma_eulerian_to_lagrangian}
Let $s > 2$ and $T > 0$. If $u \in C([0,T];H^s(\R^2;\R^2))$ is a solution to \eqref{alternative} then its flow map $\varphi$ is a solution to \eqref{lagrangian}.
\end{Lemma}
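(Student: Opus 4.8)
The plan is to build the flow $\varphi$ of $u$, check that it stays in $\D^s(\R^2)$ and is $C^1$ in time, and then differentiate the defining relation $\varphi_t = u \circ \varphi$ once more to recover the second line of \eqref{lagrangian}. First I would use that, since $s > 2$, the Sobolev imbedding $H^s(\R^2) \hookrightarrow C^1$ gives $u \in C([0,T] \times \R^2;\R^2)$ with each $u(t)$ Lipschitz in $x$ uniformly on $[0,T]$ (the image $\{u(t)\}_{t \in [0,T]}$ is bounded in $H^s$, hence in $C^1$). By Picard--Lindel\"of the flow $\varphi(t,x)$ with $\partial_t \varphi = u(t,\varphi)$, $\varphi(0,x)=x$, then exists and is unique on $[0,T] \times \R^2$. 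Its Jacobian $J(t,x)=\det d_x \varphi$ solves $\partial_t J = (\operatorname{div} u)(t,\varphi)\,J$ with $J(0)=1$, so $J(t,x)=\exp\!\big(\int_0^t (\operatorname{div} u)(s,\varphi)\,ds\big) > 0$; together with the persistence of Sobolev regularity of the flow (see \cite{composition}), which gives $\varphi(t)-\mbox{id} \in H^s(\R^2;\R^2)$ continuously in $t$, this yields $\varphi \in C([0,T];\D^s(\R^2))$.

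Next I would set $v := \varphi_t$. By the composition properties (\cite{composition}), $u(t)\in H^s$ and $\varphi(t)\in\D^s(\R^2)$ give $u \circ \varphi \in H^s$ with continuous dependence on $t$; since $\varphi_t = u \circ \varphi$ this shows $\varphi \in C^1([0,T];\D^s(\R^2))$ with $v = \varphi_t \in C([0,T];H^s(\R^2;\R^2))$, which is exactly the first line of \eqref{lagrangian}. Moreover $v \circ \varphi^{-1} = (u \circ \varphi)\circ \varphi^{-1} = u$, so the right-hand side of \eqref{lagrangian} simplifies to $B(v \circ \varphi^{-1},v \circ \varphi^{-1}) \circ \varphi = B(u,u)\circ\varphi$. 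It therefore remains to identify $v_t$ with $B(u,u)\circ\varphi$ as an equality in $H^s$.

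For this I would argue pointwise first, just as in the proof of Lemma \ref{lemma_lagrangian_to_eulerian}. From \eqref{alternative} we have $u_t = B(u,u)-(u\cdot\nabla)u \in C([0,T];H^{s-1})$, and $H^{s-1}\hookrightarrow C^0$ for $s>2$ makes $u \in C^1([0,T]\times\R^2;\R^2)$. Hence $\varphi$ is classically $C^1$ in $(t,x)$, $\varphi_t = u\circ\varphi$ is classically differentiable in $t$, and differentiating gives pointwise
\[
 \varphi_{tt} = u_t \circ \varphi + (Du \circ \varphi)\,\varphi_t = \big(u_t + (u \cdot \nabla)u\big)\circ\varphi = B(u,u)\circ\varphi
\]
using \eqref{alternative} and $\varphi_t = u\circ\varphi$. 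To upgrade this to an identity in $H^s$ I would invoke Proposition \ref{prop_analytic}: the map $(\varphi,v)\mapsto B(v\circ\varphi^{-1},v\circ\varphi^{-1})\circ\varphi$ is real analytic into $H^s$, so $t\mapsto B(u,u)\circ\varphi(t) \in C([0,T];H^s)$. Writing the pointwise identity in integral form $v(t)=v(0)+\int_0^t \varphi_{tt}\,ds$ and comparing with the Bochner integral $\int_0^t B(u,u)\circ\varphi\,ds$ (well defined in $H^s$, and commuting with evaluation at $x$ via $H^s\hookrightarrow C^0$), the two integrals agree pointwise; since both $v(t)$ and $v(0)$ lie in $H^s$ this forces equality in $H^s$. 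Thus $v\in C^1([0,T];H^s)$ with $v_t = B(v\circ\varphi^{-1},v\circ\varphi^{-1})\circ\varphi$, and combined with $\varphi_t=v$ this is \eqref{lagrangian}.

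The main obstacle is the regularity bookkeeping in this last step: the two summands $u_t\circ\varphi$ and $(Du\circ\varphi)\,\varphi_t$ each cost one derivative and individually lie only in $H^{s-1}$, so the fact that their sum $B(u,u)\circ\varphi$ regains that derivative and lands in $H^s$ is invisible from the naive decomposition. This ``no loss of regularity'' is precisely what Proposition \ref{prop_analytic} encodes, and it is the appeal to that proposition that allows me to pass from the pointwise computation of $\varphi_{tt}$ to the $H^s$-valued geodesic equation \eqref{lagrangian}.
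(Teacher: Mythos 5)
Your proof is correct and follows essentially the same route as the paper: obtain the flow in $C^1([0,T];\D^s(\R^2))$ (the paper simply cites \cite{thesis} for this, where you reconstruct it via Picard--Lindel\"of plus the Sobolev persistence result of \cite{composition}), differentiate $\varphi_t = u \circ \varphi$ pointwise using $u \in C^1([0,T]\times\R^2;\R^2)$ to get $\varphi_{tt} = B(u,u)\circ\varphi$, and then upgrade this to an identity in $H^s$ using the regularity of $B$ from Proposition \ref{prop_analytic}. Your Bochner-integral argument for that last upgrade is just a more carefully spelled-out version of the step the paper asserts in one line (``which is also an identity in $H^s$''), so the two proofs coincide in substance.
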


\begin{proof}
We know (see \cite{thesis}) that for $u$ there is a unique $\varphi \in C^1([0,T];\D^s(\R^2))$ with
\[
 \varphi_t = u \circ \varphi,\quad \varphi(0)=\mbox{id}
\]
By the integral relation $u(t)=u_0 + \int_0^t B(u,u)-(u \cdot \nabla) u \;ds$ we see that $u \in C^1([0,T] \times \R^2;\R^2)$. Taking the derivative we get pointwise
\[
  \varphi_{tt} = (u_t + (u \cdot \nabla)u) \circ \varphi = B(u,u) \circ \varphi
\]
This means
\[
 \varphi(t) = \varphi_t(0) + \int_0^t B(\varphi_t \circ \varphi^{-1},\varphi_t \circ \varphi^{-1}) \circ \varphi \; ds \quad \forall \; 0 \leq t \leq T
\]
which is also an identity in $H^s$. Therefore $\varphi \in C^2([0,T];\D^s(\R^2))$ and $\varphi$ is a solution to \eqref{lagrangian}.
\end{proof}

Lemma \ref{lemma_lagrangian_to_eulerian} and Lemma \ref{lemma_eulerian_to_lagrangian} establish the equivalence of \eqref{alternative} and \eqref{lagrangian}. The solutions of \eqref{lagrangian} can be described by an exponential map as follows: Consider \eqref{lagrangian} with initial values $\varphi(0)=\mbox{id}$ and $v(0) \in H^s(\R^2;\R^2)$. Further we denote by $U \subseteq H^s(\R^2;\R^2)$ those initial values $v(0)=u_0$ for which we have a solution on $[0,1]$. With this we define
\[
 \exp: U \to \D^s(\R^2),\quad u_0 \mapsto \varphi(1;u_0)
\]
where $\varphi(1;u_0)$ is the time one value of the solution $\varphi$ corresponding to the initial values $\varphi(0)=\mbox{id}$ and $v(0)=u_0$. By Proposition \eqref{prop_analytic} we know that $\exp$ is real analytic because we have analytic dependence on the initial value $u_0$. The $\varphi$-solution can be totally described by the exponential map. For $u_0$ the corresponding $\varphi$ is just given by 
\[
 \varphi(t)=\exp(tu_0)
\]
for all $t$ for which $tu_0$ lies in $U$. Furthermore the derivative of $\exp$ at $0$ is the identity map. For details on the exponential map one can consult \cite{lang}. We end this section by giving a proof of Theorem \ref{lwp}.

\begin{proof}[Proof of Theorem \ref{lwp}]
Take $\theta_0 \in H^s(\R^2)$ and define for $u_0=(-\mathcal R_2\theta_0,\mathcal R_1 \theta_0)$
\[
 \varphi(t)=\exp(t u_0) \quad \mbox{and} \quad u(t)=\varphi_t(t) \circ \varphi(t)^{-1}
\]
for all $t \geq 0$ with $t u_0$ in the domain of definition of $\exp$. By the properties of the composition map -- see \cite{composition} -- we know that $u \in C([0,T];H^s(\R^2;\R^2))$ for some $T > 0$. With this we define
\[
 \theta(t) = \mathcal R_2 u_1(t) - \mathcal R_1 u_2(t)
\]
which solves \eqref{sqg}. Furthermore the dependence on $\theta_0$ is continuous. Uniqueness of solutions follows from the uniqueness of solutions to ODEs. More precisely assume two solutions $\theta$ and $\tilde \theta$ with the same initial value $\theta_0$. Define the corresponding $u=(-\mathcal R_2\theta,\mathcal R_1 \theta)$ und $\tilde u=(-\mathcal R_2 \tilde \theta,\mathcal R_1 \tilde \theta)$. By Lemma \ref{lemma_eulerian_to_lagrangian} their flows $\varphi$ resp. $\tilde \varphi$ are solutions to \eqref{lagrangian}. So they have to be equal which implies that $\theta \equiv \tilde \theta$.
\end{proof}

\section{Non-uniform dependence}\label{section_nonuniform}

Throughout this section we assume $s > 2$. In this section we prove Theorem \ref{nonuniform}. We introduce the notation $\Phi_T$ for the time $T$-solution map, $T > 0$, i.e. $\Phi_T(\theta_0)$ denotes the value of the solution to \eqref{sqg} with initial value $\theta_0$ at time $T$. As already introduced we denote by $U_T \subseteq H^s(\R^2)$ the domain of definition of $\Phi_T$. In the case of $T=1$ we use $\Phi:=\Phi_T$ and $U:=U_T$. By the scaling property of \eqref{sqg} we have
\[
 \Phi_T(\theta_0)= \frac{1}{T} \Phi(T \theta_0) \quad \mbox{and} \quad U_T=\frac{1}{T} U
\]
So to prove Theorem \ref{nonuniform} it suffices to give a proof for the special case $T=1$.

\begin{Prop}\label{prop_nonuniform}
The map
\[
 \Phi:U \to H^s(\R^2),\quad \theta_0 \mapsto \Phi(\theta_0)
\]
is nowhere locally uniformly continuous.
\end{Prop}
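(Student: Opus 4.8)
The plan is to establish non-uniform continuity near an arbitrary point by a gliding-hump construction, exploiting the representation of the solution map as a composition. Fix $\theta_* \in U$ and $r > 0$; since $U$ is open (the solution persists under small perturbations of the data) and $\exp$ is real analytic, it suffices to produce two sequences $\theta_n^{\pm} \in U$ lying in the ball of radius $r$ about $\theta_*$ with $\| \theta_n^+ - \theta_n^- \|_{H^s} \to 0$ while $\| \Phi(\theta_n^+) - \Phi(\theta_n^-) \|_{H^s} \geq c > 0$. The starting point is the identity $\Phi(\theta_0) = \theta_0 \circ \varphi^{-1}$, where $\varphi = \exp(u_0)$ with $u_0 = (-\mathcal R_2\theta_0, \mathcal R_1\theta_0)$ is, by Proposition \ref{prop_analytic}, a real-analytic function of $\theta_0$. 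The non-uniform continuity will come entirely from the fact that the composition $\theta_0 \mapsto \theta_0 \circ \psi$ reacts violently to tiny changes in $\psi$ when $\theta_0$ carries high-frequency content.

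First I would fix a profile $\chi \in C_c^\infty$ supported near a point $p$ and introduce a common high-frequency hump $h_n(x) = c_0\, n^{-s}\chi(x-p)\cos(n x_1)$, normalized so that $\|h_n\|_{H^s}$ equals a fixed constant $c_0 < r/2$ independent of $n$. Crucially, because $s > 2$, one has $\|h_n\|_{C^1} \sim n^{1-s} \to 0$, so the velocity $(-\mathcal R_2 h_n, \mathcal R_1 h_n)$ generated by the hump is negligible at the level of the flow. I would then set $\theta_n^- = \theta_* + h_n$ and $\theta_n^+ = \theta_* + h_n + d_n$, where $d_n(x) = \tfrac1n \psi(x-p)$ is a fixed-frequency, low-amplitude bump chosen so that the induced velocity $(-\mathcal R_2 d_n, \mathcal R_1 d_n)$ has a nonvanishing $x_1$-component near $p$. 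Then $\theta_n^+ - \theta_n^- = d_n \to 0$ in $H^s$, giving convergence of the inputs, and for large $n$ both data lie in the prescribed ball and in $U$.

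Next I would analyze the flows. Writing $\varphi_n^- = \exp(u_* + u_{h_n})$ and $\varphi_n^+ = \exp(u_* + u_{h_n} + u_{d_n})$ (with $u_*, u_{h_n}, u_{d_n}$ the velocities of $\theta_*, h_n, d_n$), and using the analyticity of $\exp$ together with $d_0\exp = \id$, I would show that near $p$ the two inverse flows $(\varphi_n^{\pm})^{-1}$ differ by a relative displacement of size $\sim 1/n$ in the $x_1$-direction, coming from the extra velocity $u_{d_n}$, while the hump's self-interaction and the higher-order terms of $\exp$ contribute only $o(1/n)$. The decisive point is that this displacement is exactly matched to the wavelength $1/n$ of $h_n$: composing the oscillation $\cos(n x_1)$ with the two flows shifts its phase by an $O(1)$, non-constant amount, so the leading (top-order derivative) contribution to $\| h_n \circ (\varphi_n^+)^{-1} - h_n \circ (\varphi_n^-)^{-1} \|_{H^s}$ is bounded below by a constant comparable to $c_0$. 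The remaining pieces, namely $\theta_* + h_n$ composed with the two nearby flows and the low-frequency $d_n$ itself, contribute differences tending to $0$, so the output separation persists.

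The main obstacle is exactly this last estimate: one must show simultaneously that the flows $\varphi_n^{\pm}$ are close enough, their difference being only $O(1/n)$, for the inputs to converge, yet that this $O(1/n)$ discrepancy does not cancel once composed with the hump, leaving an output gap bounded away from zero. Making this quantitative requires a first-order expansion of $\exp$ in the low-frequency perturbation $d_n$ via Proposition \ref{prop_analytic}, precise control of the $H^s$-norm of a composed high-frequency oscillation (isolating the top-order part, where the factor $n^s$ from differentiating $\cos(n x_1)$ meets the amplitude $n^{-s}$), and verification that the hump's own velocity, being $o(1)$ in $C^1$, perturbs the flow negligibly. This matching of the displacement scale to the oscillation scale is the heart of the gliding-hump mechanism and the step where all the work concentrates.
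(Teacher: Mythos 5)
Your overall mechanism (an oscillating hump of fixed $H^s$-size whose wavelength $1/n$ is matched to a flow displacement of size $1/n$) is a genuinely different route from the paper's, which instead shrinks the \emph{support} of the hump at rate $1/n$ and concludes via the disjoint-support inequalities \eqref{ineq1}, \eqref{ineq2}. But as written your argument has a genuine gap at its central step. The displacement you need is, to first order, $d_{u_*+u_{h_n}}\exp(u_{d_n})$ evaluated near $p$, and you justify its non-vanishing by appealing to analyticity of $\exp$ together with $d_0\exp=\mbox{id}$. That identity controls the differential of $\exp$ only at the origin; it gives no information about $d_{u_*}\exp$ for an arbitrary base point $u_*$. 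A priori the functional $w\mapsto (d_{u_*}\exp(w))(p)$ could vanish identically (degeneracies of $d\exp$ away from $0$ are excluded by nothing you invoke), in which case the relative displacement of your two flows near $p$ is $o(1/n)$ and the phase-shift mechanism collapses. This is precisely the obstruction that Lemmas \ref{lemma_nonzero} and \ref{lemma_dense} are built to overcome: non-degeneracy is first proved at the base point $0$, and analyticity of $t\mapsto (d_{tu_0}\exp(u))(x^\ast)$ then propagates it to base points $t_n\theta_0$ with $t_n\uparrow 1$, yielding a \emph{dense} set $S$ of admissible centers, which suffices because every ball contains a ball centered at a point of $S$. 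Your construction, carried out at a completely arbitrary $\theta_*$, is only justified when $\theta_*$ is near $0$; to get ``nowhere'' you need this density argument or a substitute for it.

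A second, smaller but real, error: the claim that ``the hump's self-interaction and the higher-order terms of $\exp$ contribute only $o(1/n)$'' is false. In the Taylor expansion of $\exp(u_*+u_{h_n}+u_{d_n})-\exp(u_*+u_{h_n})$, the cross term $\int_0^1(1-t)\,d^2\widetilde\exp(u_{h_n},u_{d_n})\,dt$ (and likewise the base-point variation of the self-interaction term) has size of order $\|u_{h_n}\|_s\,\|u_{d_n}\|_s\sim c_0/n$: the \emph{same} order $1/n$ as the main term, merely carrying a factor $c_0$. Such terms can only be beaten by making the hump amplitude $c_0$ small compared with the non-degeneracy constant $m$ and the bound $K$ on second derivatives of $\widetilde\exp$; this is exactly the role of condition \eqref{condition} and the reason the paper chooses $R_\ast$ last. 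The fix is routine but changes your logic: $c_0$ is not free, and the output separation you obtain is proportional to it. Finally, note that your route still requires two estimates that the paper's support-separation trick deliberately avoids: a quantitative $H^s$ lower bound for differences of composed oscillations $h_n\circ(\varphi_n^+)^{-1}-h_n\circ(\varphi_n^-)^{-1}$ (isolating the top-order frequency $\sim n$ part), and pointwise control of the difference of \emph{inverse} diffeomorphisms $(\varphi_n^+)^{-1}-(\varphi_n^-)^{-1}$ near $p$; the paper only ever estimates the forward maps and lets the images of the shrinking supports separate, after which \eqref{ineq2} and \eqref{bdd} finish the proof.
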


Before proving Proposition \ref{prop_nonuniform} we have to make some preparation.

\begin{Lemma}\label{lemma_nonzero}
For any $x \in \R^2$ there is a $\theta \in H^s(\R^2)$ such that $u(x) \neq 0$ where
\[
 u=(-\mathcal R_2 \theta,\mathcal R_1 \theta)
\]
\end{Lemma}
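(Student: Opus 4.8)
The plan is to reduce the statement to the nontriviality and injectivity of a single Riesz transform, which is transparent on the Fourier side. Since $u=(-\mathcal R_2\theta,\mathcal R_1\theta)$, it suffices to produce, for the given $x$, a function $\theta\in H^s(\R^2)$ with $\mathcal R_1\theta(x)\neq 0$; then automatically $u(x)\neq 0$. First I would observe that all the pointwise values make sense: $\mathcal R_1$ has Fourier multiplier $i\xi_1/|\xi|$, of modulus $\le 1$, so $\mathcal R_1$ maps $H^s(\R^2)$ into itself, and since $s>2>1$ the Sobolev embedding $H^s(\R^2)\hookrightarrow C^0(\R^2)$ legitimizes evaluating $\mathcal R_1\theta$ at a point.

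The first ingredient is that $\mathcal R_1$ is injective on $H^s$. Indeed, if $\mathcal R_1\theta=0$ then $\frac{i\xi_1}{|\xi|}\widehat\theta(\xi)=0$ for a.e. $\xi$, so $\widehat\theta$ is supported in the Lebesgue-null set $\{\xi_1=0\}$ and hence $\theta=0$. Consequently, for any nonzero $\theta_0\in H^s(\R^2)$ — for instance a bump function — the continuous function $\mathcal R_1\theta_0$ is not identically zero, so there is a point $y_0\in\R^2$ with $\mathcal R_1\theta_0(y_0)\neq 0$.

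The second ingredient is translation invariance. Writing $\tau_h\theta=\theta(\cdot-h)$, the operator $\mathcal R_1$ is a Fourier multiplier operator and hence commutes with translations, so $\mathcal R_1(\tau_h\theta_0)(x)=(\mathcal R_1\theta_0)(x-h)$. Choosing $h=x-y_0$ and setting $\theta:=\tau_{x-y_0}\theta_0$ gives $\mathcal R_1\theta(x)=\mathcal R_1\theta_0(y_0)\neq 0$, and therefore $u(x)\neq 0$, as required.

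I do not expect a genuine obstacle here; this is a soft preparatory lemma. The only points needing care are the two standard facts invoked above: that $\mathcal R_1$ is bounded (indeed of norm $\le 1$) on $H^s$ with values again in $H^s$, so that the composition $H^s\to H^s\hookrightarrow C^0$ makes the pointwise evaluation meaningful, and that the multiplier $i\xi_1/|\xi|$ vanishes only on a null set, which forces injectivity. A fully constructive alternative would be to prescribe $\widehat\theta$ as a small, suitably symmetrized frequency bump near a point $\xi_0$ with $(\xi_0)_1\neq 0$, for which $\mathcal R_1\theta(x)$ is manifestly nonzero; but the translation argument avoids this computation entirely.
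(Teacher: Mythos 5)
Your proof is correct, but it takes a genuinely different route from the paper's. You argue on the Fourier side: boundedness of $\mathcal R_1$ on $H^s$ together with the embedding $H^s(\R^2)\hookrightarrow C^0$ makes pointwise evaluation legitimate; injectivity of $\mathcal R_1$ follows because its symbol $i\xi_1/|\xi|$ vanishes only on the Lebesgue-null set $\{\xi_1=0\}$, which gives a point $y_0$ where $\mathcal R_1\theta_0(y_0)\neq 0$ for any nonzero bump $\theta_0$; and translation invariance of Fourier multiplier operators moves that point to the prescribed $x$. The paper instead works on the physical side with the kernel representation
\[
\mathcal R_k\theta(x)= c\;\mathrm{p.v.}\int_{\R^2}\frac{x_k-y_k}{|x-y|^3}\,\theta(y)\,dy ,\qquad c>0,
\]
and simply chooses $\theta$ smooth, positive, compactly supported to the lower-left of $x$: then the integrand is positive for both $k=1,2$ (and there is no principal-value issue since $x$ lies outside the support), so both components of $u(x)$ are strictly positive. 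What the paper's construction buys is explicitness — a definite sign, and control over the support and location of $\theta$ relative to $x$ — while yours buys softness and generality: no kernel formula or constant is needed, only the multiplier description, and the same argument works verbatim for any nontrivial Fourier multiplier whose symbol vanishes only on a null set. Since the only later use of this lemma (in Lemma \ref{lemma_dense}) requires nothing beyond the existence of some $\theta\in H^s(\R^2)$ with $u(x)\neq 0$, your version substitutes for the paper's without any change downstream.
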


\begin{proof}
Recall the integral representation for the Riesz transforms $\mathcal R_k, k=1,2$
\[
 \mathcal R_k \theta(x) = \frac{1}{\sqrt{2\pi}} \;p.v. \int_{\R^2} \frac{x_k-y_k}{|x-y|^3} \theta(y) \;dy,\quad k=1,2
\]
in the principal value sense. For the given $x \in \R^2$ we can just choose a smooth positive $\theta$ with compact support lying on the left-down of $x$. We therefore have trivially $\mathcal R_k\theta(x) > 0$ for $k=1,2$.
\end{proof}

A consequence of this lemma is the following technical lemma.

\begin{Lemma}\label{lemma_dense}
There is a dense subset $S \subseteq U$ consisting of functions with compact support such that each function $\theta_0 \in S$ has the following property:\\
There is $x \in \R^2$ and $\theta \in H^s(\R^2)$ depending on $\theta_0$ such that
\[
 \operatorname{dist}(x,\operatorname{supp}\theta_0) \geq 2
\]
and
\[
 (d_{u_0} \exp(u))(x) \neq 0
\]
where $u_0=(-\mathcal R_2 \theta_0,\mathcal R_1 \theta_0), u=(-\mathcal R_2 \theta,\mathcal R_1 \theta)$ and $d_{u_0}\exp$ denotes the differential of $\exp$ at the point $u_0$.
\end{Lemma}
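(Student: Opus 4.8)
The plan is to exploit the real-analyticity of $\exp$ furnished by Proposition \ref{prop_analytic} together with the fact that $d_0\exp=\id$, and to invoke Lemma \ref{lemma_nonzero} precisely at the base point $0$. Fix a point $x\in\R^2$, and use Lemma \ref{lemma_nonzero} to choose $\theta\in H^s(\R^2)$ with $u(x)\neq 0$, where $u=(-\mathcal R_2\theta,\mathcal R_1\theta)$. Define
\[
 G_x:U\to\R^2,\qquad \psi\mapsto \big(d_{(-\mathcal R_2\psi,\mathcal R_1\psi)}\exp(u)\big)(x).
\]
Since $\psi\mapsto(-\mathcal R_2\psi,\mathcal R_1\psi)$ is bounded linear, $\exp$ is real-analytic by Proposition \ref{prop_analytic} (hence so is its differential $u_0\mapsto d_{u_0}\exp$), and evaluation at $x$ is a bounded functional via the imbedding $H^s\hookrightarrow C^0$, the map $G_x$ is real-analytic on $U$. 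Moreover $G_x(0)=(d_0\exp(u))(x)=u(x)\neq 0$ because $d_0\exp=\id$, so $G_x$ is a nontrivial real-analytic $\R^2$-valued map.

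The next step is to decouple the point $x$ from the support condition by localizing. For $R>0$ let $E_R=\{\psi\in H^s(\R^2):\operatorname{supp}\psi\subseteq\overline{B_R(0)}\}$, a closed subspace, and put $N_R=U\cap E_R$. The set $U$ is open and star-shaped with respect to $0$ (the geodesic rescaling $\exp(t u_0)=\varphi(t;u_0)$ shows $\lambda u_0\in U$ for $\lambda\in[0,1]$ whenever $u_0\in U$, and this transfers to the scalar picture through the linear isomorphism $\theta_0\mapsto u_0$), so $N_R$ is an open, connected (indeed star-shaped) subset of $E_R$ containing $0$. Fix $x_R$ with $|x_R|\geq R+2$ and the corresponding $\theta_R$ from Lemma \ref{lemma_nonzero}; then $G_{x_R}$ restricts to a real-analytic map on $N_R$ with $G_{x_R}(0)\neq 0$. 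By the identity theorem for analytic maps on a connected open subset of a Banach space, the zero set $Z_R=\{G_{x_R}=0\}\cap N_R$ has empty interior in $N_R$: were $G_{x_R}$ to vanish on an open subset, each component would vanish identically by connectedness, contradicting $G_{x_R}(0)\neq 0$. Hence $N_R\setminus Z_R$ is dense in $N_R$, and every $\theta_0\in N_R\setminus Z_R$ satisfies the assertion with $x=x_R$, $\theta=\theta_R$, since $\operatorname{dist}(x_R,\operatorname{supp}\theta_0)\geq|x_R|-R\geq 2$ and $(d_{u_0}\exp(u))(x_R)=G_{x_R}(\theta_0)\neq 0$.

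Finally I would assemble the global dense set by letting $R$ run through $\N$. Put $S=\bigcup_{R\in\N}(N_R\setminus Z_R)$; by construction each of its elements is compactly supported and enjoys the stated property. To see $S$ is dense in $U$, take $\psi\in U$ and $\varepsilon>0$: as $C_c^\infty(\R^2)\cap U$ is dense in $U$, pick a compactly supported $\chi\in U$ with $\|\chi-\psi\|_{H^s}<\varepsilon/2$, so that $\chi\in N_R$ for some $R$; density of $N_R\setminus Z_R$ in $N_R$ then yields $\theta_0\in S$ with $\|\theta_0-\chi\|_{H^s}<\varepsilon/2$, whence $\|\theta_0-\psi\|_{H^s}<\varepsilon$. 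This produces the desired dense $S$.

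I expect the two genuinely substantive points to be the inputs that legitimize the analytic-continuation argument: first, the real-analyticity of $\psi\mapsto\big(d_{u_0}\exp(u)\big)(x)$, which rests entirely on Proposition \ref{prop_analytic} (analyticity of $\exp$, hence of its differential, composed with bounded linear maps and evaluation); and second, the use of the identity theorem in the infinite-dimensional setting, for which the star-shapedness of $U$ provides the connectedness of each $N_R$. The remaining ingredients — the choice of $x$ through Lemma \ref{lemma_nonzero}, the distance bookkeeping $|x_R|-R\geq2$, and the gluing over $R$ — are routine.
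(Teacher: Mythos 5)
Your proof is correct, and it rests on exactly the same pillars as the paper's: Lemma \ref{lemma_nonzero} applied at a point far from the support, the identity $d_0\exp=\id$, the real-analyticity of $\exp$ coming from Proposition \ref{prop_analytic}, and the star-shapedness of the domain (which the paper also needs implicitly, and which follows from its scaling relation $U_T=\tfrac{1}{T}U$: for $\lambda\in(0,1]$ one has $\lambda U=U_{1/\lambda}\subseteq U$). Where you genuinely differ is the mechanism that produces density. The paper argues one ray at a time: for a fixed compactly supported $\theta_0\in U$ it considers the one-variable analytic function $t\mapsto(d_{tu_0}\exp(u))(x)$, which is nonzero at $t=0$, hence nonzero along some sequence $t_n\uparrow 1$; the dense set $S$ is then the collection of rescalings $t_n\theta_0$ (which have the same support as $\theta_0$, so the same witness $x$ serves), and density follows from $t_n\theta_0\to\theta_0$ together with density of compactly supported functions in $H^s(\R^2)$. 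You instead slice $U$ by the closed subspaces $E_R$ of functions supported in $\overline{B_R(0)}$ and apply the identity theorem for real-analytic maps on the connected open sets $N_R=U\cap E_R$, concluding that the zero set of $G_{x_R}$ has empty interior. Both are sound. The paper's route is more elementary --- it needs nothing beyond the fact that a nonzero real-analytic function of one real variable has isolated zeros --- whereas yours invokes the identity theorem in the infinite-dimensional setting (standard for the notion of analyticity used here, but heavier machinery). In exchange your argument yields a slightly stronger conclusion: inside each slice $N_R$ the good set is open and dense, and a single witness pair $(x_R,\theta_R)$ works uniformly for all of its elements, instead of a witness attached to each ray of rescalings.
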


\begin{proof}
Take $\theta_0 \in U$ with compact support. Take an arbitrary $x \in \R^2$ which has a distance of more than $2$ to the support of $\theta_0$. By Lemma \ref{lemma_nonzero} there is $\theta \in H^s(\R^2)$ with $u(x) \neq 0$. Consider now the analytic function
\[
 t \mapsto (d_{t u_0} \exp(u))(x)
\]
At $t=0$ this equals to $u(x) \neq 0$. Therefore there is a sequence $0 \leq t_n \uparrow 1$ with
\[
 (d_{t_n u_0} \exp(u))(x) \neq 0,\quad \forall n \geq 1
\]
We put all $t_n \theta_0$ into $S$. Doing that for all $\theta_0$ with compact support we get our desired result, as the compactly supported functions are dense in $H^s(\R^2)$.
\end{proof}

In the following we will use inequalities for functions with disjoint compact support of the type
\[
 ||f + g|| \geq C(||f|| + ||g||)
\]
for Sobolev norms. More precisely, given $s' \geq 0$ and fixed disjoint compact sets $K_1, K_2 \subseteq \R^2$ there is a constant $C > 0$ such that we have
\begin{equation}\label{ineq1}
 ||f+g||_{s'} \geq C(||f||_{s'} + ||g||_{s'})
\end{equation}
for all $f,g \in H^{s'}(\R^2)$ with $f,g$ supported in $K_1$ resp. $K_2$. We have a similar situation if the geometry of the supports is in a fixed ratio. We will use it as follows: There is a constant $C > 0$ such that for $x,y \in \R^2$ with
\[
 0 < r:=\frac{|x-y|}{4} < 1
\]
we have
\begin{equation}\label{ineq2}
 ||f+g||_{s'} \geq C(||f||_{s'}+||g||_{s'})
\end{equation}
for all $f,g \in H^{s'}(\R^2)$ with $f$ supported in $B_r(x)$ and $g$ supported in $B_r(y)$. Here $B_r(z)$ denotes the ball around $z$ with radius $r$. For the details one can look at the Appendix in \cite{b_family}.\\ \\
Now we prove Proposition \ref{prop_nonuniform}.

\begin{proof}[Proof of Proposition \ref{prop_nonuniform}]
For a given $\theta_0 \in U$ we will show that there is $R_\ast > 0$ such that $\Phi$ is not uniformly continuous on $B_R(\theta_0)$ for all $0 < R \leq R_\ast$. We will choose $R_\ast$ in several steps. It is enough to show that for $\theta_0$ in the dense subset $S \subseteq U$. So take an arbitrary $\theta_0$ in $S$. To make the notation easier we introduce the analytic map $\widetilde \exp$
\[
 \widetilde \exp:U \to \D^s(\R^2),\quad \theta \mapsto \exp((-\mathcal R_2 \theta,\mathcal R_1 \theta))
\]
In particular we then have for a solution $\theta$ of \eqref{sqg}
\begin{equation}\label{conserved_exp}
\theta(1)=\theta(0) \circ (\widetilde\exp(\theta(0)))^{-1}
\end{equation}
Furthermore we fix by Lemma \ref{lemma_dense} a $v \in H^s(\R^2;\R^2), v \neq 0$, and $x^\ast \in \R^2$ with $\operatorname{dist}(x^\ast,\operatorname{supp}\theta_0) \geq 2$ and
\[
 |(d_{\theta_0} \widetilde\exp(v))(x^\ast)| \geq m ||v||_s
\]
for some $m > 0$. By the Sobolev imbedding we have
\begin{equation}\label{sobolev_imbedding}
||f||_{C^1} \leq C_1 ||f||_s,\quad \forall f \in H^s(\R^2)
\end{equation}
for some $C_1 > 0$. We choose an $R_1 > 0$ such that for some $C_2 > 0$ we have
\begin{equation}\label{bdd}
\frac{1}{C_2} ||f||_s \leq ||f \circ \varphi^{-1}||_s \leq C_2 ||f||_s
\end{equation}
for all $f \in H^s(\R^2)$ and $\varphi \in \widetilde\exp(B_{R_1}(\theta_0))$. That this is indeed possible follows from the continuity of the composition map and the linearity in $f$ -- see \cite{composition}. We try to get a situation as described in \eqref{ineq1}. Let $\varphi_0=\widetilde \exp(\theta_0)$. Then as $x^\ast$ is enough away from the support of $\theta_0$ we have
\[
 d:=\operatorname{dist}\left(\varphi_0(\operatorname{supp}\theta_0),\varphi_0(\overline{B_1(x^\ast)})\right) > 0
\]
We choose $K_1, K_2$ as
\[
 K_1:=\{ x \in \R^2 \; | \; \operatorname{dist}(x,\varphi_0(\operatorname{supp}\theta_0)) \leq d/4 \}
\]
resp.
\[
 K_2:=\{ x \in \R^2 \; | \; \operatorname{dist}(x,\varphi_0(\overline{B_1(x^\ast)})) \leq d/4 \}
\]
By choosing $0 < R_2 \leq R_1$ we can ensure by the Sobolev imbedding \eqref{sobolev_imbedding}
\[
 |\varphi(x)-\varphi(y)| \leq L |x-y|,\quad \forall x,y \in \R^2 \quad \mbox{and} \quad ||\varphi-\tilde \varphi||_{L^\infty} \leq \min\{d/4,1\}
\]
for all $\varphi, \tilde \varphi \in \widetilde\exp(B_{R_2}(\theta_0))$. So the second condition ensures
\[
 \varphi(\operatorname{supp}\theta_0) \subseteq K_1 \quad \mbox{and} \quad \varphi(\overline{B_1(x^\ast)}) \subseteq K_2
\]
for all $\varphi \in \widetilde\exp(B_{R_2}(\theta_0))$. Consider the Taylor expansion
\[
 \widetilde\exp(\theta + h) = \widetilde\exp(\theta) + d_\theta \widetilde\exp(h) + \int_0^1 (1-t) d_{\theta + t h}^2 \widetilde\exp(h,h) \;dt
\]
We need to estimate the terms appearing in this expansion. We can choose $0 < R_3 \leq R_2$ in such a way that we have for some constant $K > 0$
\[
 ||d_\theta^2 \widetilde\exp(h_1,h_2)||_s \leq K ||h_1||_s ||h_2||_s
\]
and
\[
 ||d_{\theta_1}^2 \widetilde \exp(h_1,h_2)-d_{\theta_2}^2 \widetilde\exp(h_1,h_2)||_s \leq K ||\theta_1-\theta_2||_s ||h_1||_s ||h_2||_s
\]
for all $\theta, \theta_1, \theta_2 \in B_{R_3}(\theta_0)$ and $h_1,h_2 \in H^s(\R^2)$. Finally we choose $0 < R_\ast \leq R_3$ with
\begin{equation}\label{condition}
\max\{C_1 K R_\ast,C_1 K R_\ast^2\} < m/8
\end{equation}
Now take an arbitrary $0 < R \leq R_\ast$. We will construct two sequences of initial values
\[
 (\theta_0^{(n)})_{n \geq 1}, (\tilde \theta_0^{(n)})_{n \geq 1} \subseteq B_R(\theta_0)
\]
with $\lim_{n \to \infty} ||\theta_0^{(n)}-\tilde \theta_0^{(n)}||_s=0$ but
\[
 \limsup_{n \geq 1} ||\Phi(\theta_0^{(n)})-\Phi(\tilde \theta_0^{(n)})||_s > 0
\]
showing the claim. The first sequence will be chosen in the form
\[
 \theta_0^{(n)}=\theta_0 + w^{(n)}
\]
where we take $w^{(n)} \in H^s(\R^2)$ arbitrarily with $||w^{(n)}||_s=R/2$ and having its support in $B_{r_n}(x^\ast)$ where
\[
 r_n=\frac{m}{8nL}||v||_s
\]
Thus the mass of $w^{(n)}$ is constant whereas its support shrinks to $x^\ast$. The second sequence is a perturbation of the first one so as to get a shift in the supports. We take it as
\[
 \tilde \theta_0^{(n)} = \theta_0^{(n)} + \frac{1}{n}v=\theta_0 + w^{(n)}+\frac{1}{n} v
\]
We will use the notation $v^{(n)}:=\frac{1}{n}v$. Taking $N$ large enough we clearly have
\[
 \theta_0^{(n)},\tilde \theta_0^{(n)} \in B_R(\theta_0) \mbox{ and } r_n \leq 1, \quad \forall n \geq N
\]
By construction we have
\[
 \lim_{n \to \infty} ||\theta_0^{(n)}-\tilde \theta_0^{(n)}||_s = \lim_{n \to \infty} ||v^{(n)}||_s = 0
\]
We introduce for $n \geq N$
\[
 \varphi^{(n)} = \widetilde \exp(\theta_0^{(n)}) \quad \mbox{resp.} \quad \tilde \varphi^{(n)} = \widetilde \exp(\tilde \theta_0^{(n)})
\]
By the conservation law \eqref{conserved_exp} we have
\[
 \Phi(\theta_0^{(n)})=\theta_0^{(n)} \circ (\varphi^{(n)})^{-1} \quad \mbox{resp.} \quad \Phi(\tilde \theta_0^{(n)})=\tilde \theta_0^{(n)} \circ (\tilde \varphi^{(n)})^{-1}
\]
We will use these expressions to evaluate $||\Phi(\theta_0^{(n)})-\Phi(\tilde \theta_0^{(n)})||_s$. Plugging in the expressions we get
\begin{eqnarray*}
&& ||\Phi(\theta_0^{(n)})-\Phi(\tilde \theta_0^{(n)})||_s = ||(\theta_0+w^{(n)}) \circ (\varphi^{(n)})^{-1} - (\theta_0+w^{(n)}+v^{(n)}) \circ (\tilde \varphi^{(n)})^{-1}||_s \\
&& \geq ||(\theta_0+w^{(n)}) \circ (\varphi^{(n)})^{-1}-(\theta_0 + w^{(n)}) \circ (\tilde \varphi^{(n)})^{-1} ||_s - ||v^{(n)} \circ (\tilde \varphi^{(n)})^{-1}||_s
\end{eqnarray*}
By \eqref{bdd} the last expression vanishes if we take the $\limsup$. So we just have to look at the first term on the right
\begin{eqnarray*}
&&||(\theta_0+w^{(n)}) \circ (\varphi^{(n)})^{-1}-(\theta_0 + w^{(n)}) \circ (\tilde \varphi^{(n)})^{-1} ||_s =\\
&& ||(\theta_0 \circ (\varphi^{(n)})^{-1}-\tilde \theta_0 \circ (\tilde \varphi^{(n)})^{-1})+(w^{(n)} \circ (\varphi^{(n)})^{-1}-w^{(n)} \circ (\tilde \varphi^{(n)})^{-1})||_s
\end{eqnarray*}
The first two terms in the latter expression have their support in $K_1$ and the other two in $K_2$. By \eqref{ineq1} it will be enough to establish
\[
 \limsup_{n \to \infty} ||w^{(n)} \circ (\varphi^{(n)})^{-1} - w^{(n)} \circ (\tilde \varphi^{(n)})^{-1}||_s > 0
\]
We will do that by showing that the supports of these two expressions are disjoint in a way that we can apply \eqref{ineq2}. To do that we will estimate the distance $|\varphi^{(n)}(x^\ast)-\tilde \varphi^{(n)}(x^\ast)|$ using the Taylor expansion of $\widetilde \exp$. So we have
\begin{eqnarray*}
\varphi^{(n)} &=& \widetilde \exp(\theta_0 + w^{(n)}) \\
&=& \widetilde \exp(\theta_0) + d_{\theta_0} \widetilde \exp(w^{(n)}) + \int_0^1 (1-t) d_{\theta_0 + t w^{(n)}}^2 \widetilde \exp(w^{(n)},w^{(n)}) \;dt
\end{eqnarray*}
Similarly 
\begin{eqnarray*}
&& \tilde \varphi^{(n)} = \widetilde \exp(\theta_0 + w^{(n)} + v^{(n)}) = \widetilde \exp(\theta_0) + d_{\theta_0} \widetilde \exp(w^{(n)}+v^{(n)}) \\
&&+ \int_0^1 (1-t) d_{\theta_0+t(w^{(n)}+v^{(n)})}^2 \widetilde \exp(w^{(n)}+v^{(n)},w^{(n)}+v^{(n)})\;dt
\end{eqnarray*}
So the difference reads as
\[
 \varphi^{(n)}-\tilde \varphi^{(n)} = -d_{\theta_0} \widetilde \exp(v^{(n)}) + I_1 + I_2 + I_3
\]
where
\[
 I_1 = \int_0^1 (1-t) \left(d_{\theta_0 + tw^{(n)}}^2\widetilde \exp(w^{(n)},w^{(n)}) - d_{\theta_0 + t(w^{(n)}+v^{(n)})}^2\widetilde \exp(w^{(n)},w^{(n)})\right) \; dt
\]
and
\[
 I_2= -2 \int_0^1 (1-t) d_{\theta_0+t(w^{(n)}+v^{(n)})}^2\widetilde \exp(v^{(n)},w^{(n)}) \;dt
\]
and
\[
 I_3= - \int_0^1 (1-t) d_{\theta_0 + t(w^{(n)}+v^{(n)})}^2 \widetilde\exp(v^{(n)},v^{(n)}) \;dt
\]
Using the estimates for the second derivatives from above we get
\[
 ||I_1||_s \leq K ||v^{(n)}||_s ||w^{(n)}||_s^2 = \frac{K}{4n} ||v||_s R^2
\]
and
\[
 ||I_2||_s \leq 2K ||v^{(n)}||_s ||w^{(n)}||_s = \frac{K}{n} ||v||_s R
\]
and
\[
 ||I_3||_s \leq \frac{K}{n} ||v||_s \frac{||v||_s}{n} \leq \frac{KR}{n} ||v||_s
\]
where the last inequality holds for $n \geq N$ by enlarging $N$ if necessary. Thus we see by the Sobolev imbedding that the value at $x^\ast$ can be estimated by
\[
 |I_1(x^\ast)|+|I_2(x^\ast)|+|I_3(x^\ast)| \leq \frac{C_1 K R^2}{4n} ||v||_s + \frac{C_1 K R}{n} ||v||_s + \frac{C_1 K R}{n} ||v||_s
\]
By the choice for $R_\ast$ it follows from  \eqref{condition}
\[
 |I_1(x^\ast)|+|I_2(x^\ast)|+|I_3(x^\ast)| \leq \frac{m}{2n} ||v||_s
\]
Using this inequality we arrive at
\[
 |\varphi^{(n)}(x^\ast)-\tilde \varphi^{(n)}(x^\ast)| \geq |d_{\theta_0}\widetilde \exp(v^{(n)})(x^\ast)| - \frac{m}{2n}||v||_s
\]
Hence
\[
 |\varphi^{(n)}(x^\ast)-\tilde \varphi^{(n)}(x^\ast)| \geq \frac{1}{n} m ||v||_s - \frac{m}{2n}||v|_s = \frac{m}{2n} ||v||_s
\]
By the Lipschitz property for $\varphi^{(n)},\tilde \varphi^{(n)}$ we have
\[
 \varphi^{(n)}(B_{r_n}(x^\ast)) \subseteq B_{R_n}(\varphi^{(n)}(x^\ast))
\]
with $R_n=L r_n=L \frac{m}{8nL}||v||_s=\frac{m}{8n} ||v||_s$. Similarly
\[
 \tilde \varphi^{(n)}(B_{r_n}(x^\ast)) \subseteq B_{R_n}(\tilde \varphi^{(n)}(x^\ast))
\]
This means $w^{(n)} \circ (\varphi^{(n)})^{-1}$ is supported in $B_{R_n}(\varphi^{(n)}(x^\ast))$ and $w^{(n)} \circ (\tilde \varphi^{(n)})^{-1}$ is supported in $B_{R_n}(\tilde \varphi^{(n)}(x^\ast))$. So we are in a situation where we can apply \eqref{ineq2} since the distance between the centers of support is larger that $\frac{m}{2n}||v||_s$ and the radii of the supports are $\frac{m}{8n}||v||_s$. Thus we have
\begin{eqnarray*}
&& ||w^{(n)} \circ (\varphi^{(n)})^{-1} - w^{(n)} \circ (\tilde \varphi^{(n)})^{-1}||_s \\
&& \geq C (||w^{(n)} \circ (\varphi^{(n)})^{-1}||_s + ||w^{(n)} \circ (\tilde \varphi^{(n)})^{-1}||_s) \geq \frac{C}{C_2} R/2
\end{eqnarray*}
where we used \eqref{bdd}. Thus we have
\[
 \limsup_{n \to \infty} ||\Phi(\theta_0^{(n)})-\Phi(\tilde \theta_0^{(n)})||_s \geq \tilde C R
\]
with $\tilde C$ independent of $0 < R \leq R_\ast$ whereas $\lim_{n \to \infty} ||\theta_0^{(n)}-\tilde \theta_0^{(n)}||_s = 0$. As this holds for every $0 < R \leq R_\ast$ we are done.
\end{proof}

\appendix

\section{Proof of Proposition \ref{prop_analytic}}\label{section_appendix}

In this section we will prove Proposition \ref{prop_analytic}. The ideas we will use are inspired by \cite{chemin}, \cite{constantin_sqg} and \cite{lagrangian}. Throughout this section we assume $s > n/2+1$. We introduce the operator
\[
 \Lambda = (-\Delta)^{1/2}
\]
So the Fourier transform of $\Lambda f$ is given by $|\xi| \hat f(\xi)$ where $\hat f$ denotes the Fourier transform of $f$. In the following we will also use the definition of $\Lambda$ in terms of a principal value integral
\[
 \Lambda f(x) = \mbox{p.v. } \int_{\R^n} \frac{f(x)-f(y)}{|x-y|^{n+1}} \;dy = \lim_{\varepsilon \to 0} \int_{|x-y| \geq \varepsilon} \frac{f(x)-f(y)}{|x-y|^{n+1}} \;dy
\]
We will use also the following regularization of the above singular integral -- see \cite{samko} for the technical details -- for $f$ regular enough
\begin{eqnarray*}
 \Lambda f(x) &=&  \int_{\R^n} \frac{f(x)-f(y)+(x-y)\cdot \nabla f(x)}{|x-y|^{n+1}} e^{-|x-y|^2} \;dy + \int_{\R^n} \frac{f(x)-f(y)}{|x-y|^{n+1}} (1-e^{-|x-y|^2})\;dy\\
&=& \int_{\R^n} K_1(x-y) (f(x)-f(y)+(x-y) \cdot \nabla f(x))\;dy + \int_{\R^n} K_2(x-y) (f(x)-f(y)) \;dy
\end{eqnarray*}
From \cite{constantin_sqg} we can deduce that the analytic functions $K_1(y),K_2(y)$ satisfy the estimates
\begin{equation}\label{k1_estimate}
 |\partial^\alpha K_1(y)| \leq \frac{C^{|\alpha|} |\alpha|!}{|y|^{n+1+|\alpha|}} e^{-|y|^2/2}
\end{equation}
resp.
\begin{equation}\label{k2_estimate}
 |\partial^\alpha K_2(y)| \leq C^{|\alpha|} |\alpha|! \min\{\frac{1}{|y|^{n-1+|\alpha|}},\frac{1}{|y|^{n+1+|\alpha|}}\}
\end{equation}
for all $\alpha \in \N^n$ and some universal constant $C > 0$. In the following we will also often use the algebra property of Sobolev spaces. Making the above $C$ larger if necessary we have the Kato-Ponce inequality
\begin{equation}\label{banach_algebra}
 ||f \cdot g||_{s-1} \leq C(||f||_{s-1} ||g||_\infty + ||f||_\infty ||g||_{s-1})
\end{equation}
and also (can be deduced from \eqref{banach_algebra})
\[
 ||f \cdot g||_{s-1} \leq C ||f||_{s-1} ||g||_{s-1}
\]
We prove Proposition \ref{prop_analytic} in several steps.

\begin{Lemma}\label{lemma_laplacian}
The map 
\[
 \Ds^s(\R^n) \to L(H^s(\R^n);H^{s-1}(\R^n)),\quad \varphi \mapsto [f \mapsto R_\varphi \Lambda R_\varphi^{-1} f]
\]
is real analytic.
\end{Lemma}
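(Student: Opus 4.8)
The plan is to make the conjugated operator completely explicit as a family of singular integral operators depending on $\varphi$, and then to read off real analyticity in $\varphi$ from the kernel bounds \eqref{k1_estimate}, \eqref{k2_estimate} together with the Sobolev multiplication inequality \eqref{banach_algebra}. Recall $R_\varphi f = f\circ\varphi$, so with $g:=R_\varphi^{-1}f=f\circ\varphi^{-1}$ we have
\[
(R_\varphi\Lambda R_\varphi^{-1}f)(x) = \Lambda(f\circ\varphi^{-1})(\varphi(x)).
\]
Inserting the regularized representation of $\Lambda$ and performing the change of variables $z=\varphi(y)$ (legitimate since $\det d_x\varphi>0$), and using $g\circ\varphi=f$ and $(\nabla g)\circ\varphi=(d_x\varphi)^{-T}\nabla f$, one obtains
\begin{multline*}
(R_\varphi\Lambda R_\varphi^{-1}f)(x) = \int_{\Rn} K_1(\varphi(x)-\varphi(y))\Big(f(x)-f(y)+(\varphi(x)-\varphi(y))\cdot (d_x\varphi)^{-T}\nabla f(x)\Big)\det(d_y\varphi)\,dy \\
+ \int_{\Rn} K_2(\varphi(x)-\varphi(y))\big(f(x)-f(y)\big)\det(d_y\varphi)\,dy.
\end{multline*}
This reduces the lemma to showing that the three $\varphi$-dependent ingredients --- the Jacobian $J:=\det(d_x\varphi)$, the matrix $A:=(d_x\varphi)^{-T}$, and the kernels $K_j(\varphi(x)-\varphi(y))$ --- enter analytically into an operator in $L(H^s(\Rn);H^{s-1}(\Rn))$.

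First I would dispose of the multiplier factors. The maps $\varphi\mapsto J\in H^{s-1}$ and (entrywise) $\varphi\mapsto A\in H^{s-1}$ are real analytic: $J$ is a polynomial in the entries of $d_x\varphi\in H^{s-1}$, while $A$ is obtained from $d_x\varphi$ by matrix inversion, which is real analytic where $\det\neq 0$, composed using the Banach algebra property of $H^{s-1}$. Since multiplication $H^{s-1}\times H^{s-1}\to H^{s-1}$ and the pairing with $\nabla f\in H^{s-1}$ are bounded and bilinear, it remains to treat the kernel dependence. Here I would record the bi-Lipschitz property of elements of $\Ds^s(\Rn)$: because $\varphi=\id+\psi$ with $\psi\in H^s\hookrightarrow C^1$, $d\psi$ decaying at infinity, and $\det d_x\varphi>0$ everywhere, one has $c|x-y|\le|\varphi(x)-\varphi(y)|\le C|x-y|$. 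Consequently the estimates \eqref{k1_estimate}, \eqref{k2_estimate} applied at $\varphi(x)-\varphi(y)$ hold with $|x-y|$ in place of $|\varphi(x)-\varphi(y)|$ up to adjusting constants, so the near-diagonal singularity retains the homogeneity of $\Lambda$ itself.

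For the kernel dependence I would prove analyticity locally around an arbitrary fixed $\varphi_0\in\Ds^s(\Rn)$ (the base point $\varphi_0$ being bi-Lipschitz, so the previous paragraph applies). Writing $\varphi=\varphi_0+h$ with $h\in H^s(\Rn;\Rn)$ small and Taylor-expanding the analytic function $K_j$ in the increment $h(x)-h(y)$ about $\varphi_0(x)-\varphi_0(y)$,
\[
K_j(\varphi(x)-\varphi(y)) = \sum_{k=0}^\infty \frac{1}{k!}\sum_{|\alpha|=k}\binom{k}{\alpha}\,\partial^\alpha K_j\big(\varphi_0(x)-\varphi_0(y)\big)\,\big(h(x)-h(y)\big)^\alpha,
\]
turns the operator into a formal power series in $h$ whose degree-$k$ term is a $k$-linear map of $h$ into $L(H^s;H^{s-1})$ (the $k=0$ term reproducing $R_{\varphi_0}\Lambda R_{\varphi_0}^{-1}$). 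The decisive gain is that each factor $h(x)-h(y)=(x-y)\cdot\int_0^1(dh)(y+t(x-y))\,dt$ supplies one power of $|x-y|$, so the product $(h(x)-h(y))^\alpha$ contributes $|x-y|^{k}$, which cancels the order $|x-y|^{-(n+1+k)}$ of the singularity in $\partial^\alpha K_j$ coming from \eqref{k1_estimate}, \eqref{k2_estimate}; after the cancellation the degree-$k$ kernel again has the homogeneity $-(n+1)$ of $\Lambda$, with the Gaussian (resp. two-sided) decay preserved. The crucial bookkeeping is that the factorial $|\alpha|!=k!$ produced by the kernel bounds is exactly cancelled by the $1/k!$ of the Taylor expansion, while $\sum_{|\alpha|=k}\binom{k}{\alpha}=n^k$; combined with $\|dh\|_\infty\le C\|h\|_s$ this gives a degree-$k$ operator norm bound of the form $C\,M^k\|h\|_s^k$, so the series converges in $L(H^s;H^{s-1})$ for $\|h\|_s$ small --- precisely local convergence of a power series, hence real analyticity at $\varphi_0$.

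The hard part will be the operator-norm estimate of the degree-$k$ term: one must bound a singular integral operator whose kernel is $\partial^\alpha K_j(\varphi_0(x)-\varphi_0(y))$ weighted by $k$ increment-slopes of $h$ and by the analytic factors $J,A$, in the $H^s\to H^{s-1}$ norm, with constant growing only geometrically in $k$. This is delicate because $\Lambda$ has order one, so the Sobolev bound cannot be reduced to a plain $L^2$ estimate. I expect to split the kernel into its near-diagonal part --- where the Gaussian factor in \eqref{k1_estimate} and the $|y|^{-(n+1+|\alpha|)}$ branch of \eqref{k2_estimate} yield, after the cancellation, a Calder\'on--Zygmund kernel --- and its far part, where the Gaussian decay (resp. the $|y|^{-(n-1+|\alpha|)}$ branch) gives integrability; and then to distribute up to $s-1$ derivatives through the operator using the Kato-Ponce inequality \eqref{banach_algebra}, keeping every constant geometric in $k$. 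Carrying this through uniformly in $k$, together with the analyticity of $J$ and $A$ already established, completes the proof.
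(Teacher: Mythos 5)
Your setup coincides with the paper's up to the point you call ``the hard part'': the regularized kernel representation of $\Lambda$, the change of variables, the analyticity of $J_\varphi$ and $(d\varphi)^{-\top}$ as $H^{s-1}$-valued maps, the Taylor expansion of $K_j$ in the increments, and the factorial bookkeeping are all exactly the paper's. But the step you defer is the actual core of the lemma, and the route you sketch for it breaks down. After cancelling $|x-y|^{k}$ against $|x-y|^{-(n+1+k)}$, the degree-$k$ kernel has order $-(n+1)$ and acts on first-order increments of $f$; such an integral is \emph{not absolutely convergent}, so no termwise Minkowski-plus-Kato--Ponce estimate can close. Concretely, in any application of \eqref{banach_algebra} to the product
$\partial^\alpha K_1(y)\,(h(x)-h(x-y))^\alpha\,R_{f,\varphi}(x,y)\,J_\varphi(x-y)$
some term places the $f$-remainder in the $H^{s-1}$ slot, where it gains only $|y|$, while each $h$-increment in an $L^\infty$ slot also gains only $|y|$; the total is $|y|^{-(n+1+k)}\cdot|y|^{k+1}=|y|^{-n}$, which is not integrable near the diagonal. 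This is precisely why the paper splits each increment as $g(x)-g(x-y)=R_g(x,y)-dg(x)\cdot y$, where the second-order remainder $R_g$ gains $|y|^{1+\varepsilon}$ in $L^\infty$ via $H^{s-1}\hookrightarrow C^\varepsilon$: terms containing at least one $R$ factor become absolutely convergent, and the purely first-order terms $(dg(x)\cdot y)^\alpha$ have coefficients $\partial_{k_1}g^{m_1}(x)\cdots\partial_{k_{|\alpha|}}g^{m_{|\alpha|}}(x)$ independent of $y$ that are pulled out of the integral, leaving genuine principal value operators.

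Those principal value operators are the second, more serious obstruction to your plan. Their boundedness does not follow from kernel size bounds: a Calder\'on--Zygmund size/smoothness estimate by itself does not give $L^2$-boundedness, let alone $H^s\to H^{s-1}$ bounds with constants geometric in $k$; one needs cancellation. The paper obtains it from two ingredients that both require the kernel to be a \emph{convolution} kernel: the Fourier-multiplier and commutator bounds of Lemma \ref{first_order_kernel}, and the sphere mean-zero identity $\int_S\partial^\alpha\bigl(|y|^{-(n+1)}\bigr)y_{k_1}\cdots y_{k_{|\alpha|}}\,y\,dS(y)=0$ quoted from Chemin. Because you expand about an arbitrary $\varphi_0$, your degree-$k$ kernels are $\partial^\alpha K_j(\varphi_0(x)-\varphi_0(y))$, which are not of convolution type: the bi-Lipschitz property transfers the size bounds \eqref{k1_estimate}, \eqref{k2_estimate}, but it destroys both the Fourier representation and the cancellation identity. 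This is exactly why the paper expands only around $\varphi_0=\id$, where Fourier analysis applies, and then reaches an arbitrary base point $\varphi_\bullet$ by a separate, softer argument: conjugating the homogeneous polynomials $P_k$ of the identity-based expansion by $R_{\varphi_\bullet}$, justifying the resulting Taylor formula first in the smooth category and then extending by continuity of composition, and checking convergence of the series at $\varphi_\bullet$ using $\|h\circ\varphi_\bullet^{\pm1}\|_s\le C\|h\|_s$. To repair your proof you would either need a non-convolution singular-integral theorem with constants geometric in $k$ (harder than the lemma itself), or you should expand at the identity only and add the conjugation step for general base points, as the paper does.
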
 

For the concept of analyticity in Hilbert spaces one can consult \cite{lagrangian}

\begin{proof}
The goal is to establish a power series expansion of
\[
 \left(\Lambda f(\varphi^{-1}(x))\right) \circ \varphi(x)
\]
in terms of $g=(g_1,\ldots,g_n)$ where $\varphi=\mbox{id}+g$. We split this expression according to the above regularization as
\[
R_\varphi \Lambda R_\varphi^{-1} f = \mathcal K_1(\varphi)+\mathcal K_2(\varphi)
\]
We first treat the easier case $\mathcal K_2(\varphi)$. We have
\[
 \mathcal K_2(\varphi) = \int_{\R^n} K_2(\varphi(x)-\varphi(y)) (f(x)-f(y)) J_\varphi(y) \;dy
\]
where $J_\varphi$ is the determinant of the Jacobian $d\varphi$. Note that $J_\varphi$ is a fixed polynomial in the first derivatives of $g$. We use $\varphi(x)-\varphi(y)=(x-y)+(g(x)-g(y))$ and expand into the Taylor series of $K_2$
\begin{eqnarray*}
 \mathcal K_2(\varphi) &=& \int_{\R^n} \sum_{\alpha \in \N^n} \frac{1}{\alpha!} \partial^\alpha K_2(x-y) (g(x)-g(y))^\alpha (f(x)-f(y)) J_\varphi(y) \;dy\\
&=& \int_{\R^n} \sum_{\alpha \in \N^n} \frac{1}{\alpha!} \partial^\alpha K_2(y) (g(x)-g(x-y))^\alpha (f(x)-f(x-y)) J_\varphi(x-y) \;dy\\
\end{eqnarray*}
If we separate the monomials in $J_\varphi$ we see that the individual terms are multilinear expressions in $g$. Taking the $H^{s-1}$-norm in the individual summand and using the Banach algebra properties we can dominate it by
\[
 \int_{\R^n} \frac{1}{\alpha!} |\partial^\alpha K_2(y)| C^{|\alpha|} ||g(x)-g(x-y)||_{s-1}^{|\alpha|} 2 ||f||_{s-1} K(1+||g||_s^n) \; dy
\]
for some fixed $K > 0$. Using $||g(x)-g(x-y)||_{s-1} \leq ||g||_s |y|$ and \eqref{k2_estimate} we can estimate this by
\[
 \frac{|\alpha|!}{\alpha!}2 C^{2|\alpha|} K ||g||_s^{|\alpha|} ||f||_s (1+||g||_s^n) \int_{\R^n} \min\{\frac{1}{|y|^{n-1}},\frac{1}{|y|^{n+1}}\} \;dy
\]
Summing over all $\alpha$ with $|\alpha|=k$ for a fixed $k \in \N$ we have an upper bound
\[
 \tilde C^k ||g||_s^k ||f||_s (1+||g||_s^n)
\]
which is the general term in a convergent series for small $||g||_s$, i.e. for $\varphi$ near to the identity map $\mbox{id}$. Now consider $\mathcal K_1(\varphi)$. We have
\[
 \mathcal K_1(\varphi) = \int_{\R^n} K_1(\varphi(x)-\varphi(y)) (f(x)-f(y)+(\varphi(x)-\varphi(y))^\top \cdot [d\varphi^{-1}(x)]^\top \cdot \nabla f(x)) J_\varphi(y) \;dy
\]
Developping into the Taylor series of $K_1$ as above we have
\[
 \int_{\R^n} \sum_{\alpha \in \N^n} \frac{1}{\alpha!} \partial^\alpha K_1(x-y) (g(x)-g(y))^\alpha (f(x)-f(y)+(\varphi(x)-\varphi(y))^\top \cdot [d\varphi^{-1}(x)]^\top \cdot \nabla f(x)) J_\varphi(y) \;dy
\]
By pulling out $1/J_\varphi(x)$ in front of the integral one sees by the formula for the inverse of a matrix that the individual terms under the integral are polynomials in $g$. Note that $1/J_\varphi$ depends analytically on $\varphi$ -- see \cite{lagrangian}. A change of variables leads to
\begin{multline*}
  \int_{\R^n} \sum_{\alpha \in \N^n} \frac{1}{\alpha!} \partial^\alpha K_1(y) (g(x)-g(x-y))^\alpha \\ (f(x)-f(x-y)+(\varphi(x)-\varphi(x-y))^\top \cdot [d\varphi^{-1}(x)]^\top \cdot \nabla f(x)) J_\varphi(x-y) \;dy
\end{multline*}
In order to get integrability of the kernels we need to replace $g(x)-g(x-y)$ by terms which are of higher order than 1. Therefore we write
\[
 g(x)-g(x-y)=g(x)-g(x-y) + dg(x) \cdot y - dg(x) \cdot y = R_g(x,y) - dg(x) \cdot y
\]
The $R_g(x,y)$ term is convenient since we have
\[
 g(x)-g(x-y) + dg(x) \cdot y = \int_0^1 (dg(x)-dg(x-ty)) \cdot y \; dt
\]
Thus we have the estimates
\[
 ||R_g(x,y)||_{H^{s-1}(dx)} \leq 2 ||g||_s |y|
\]
and
\[
 ||R_g(x,y)||_{L^\infty(dx)} \leq C ||g||_s |y|^{1+\varepsilon} 
\]
for some $0 < \varepsilon < 1$ because of the Sobolev imbedding $H^{s-1} \hookrightarrow C^\varepsilon$. We have similarly
\begin{multline*}
R_{f,\varphi}(x,y):=(f(x)-f(x-y)+(\varphi(x)-\varphi(x-y))^\top \cdot [d\varphi^{-1}(x)]^\top \cdot \nabla f(x))=\\
(f \circ \varphi^{-1})(\varphi(x))-(f \circ \varphi^{-1})(\varphi(x-y)) + (\varphi(x)-\varphi(x-y))^\top \cdot \nabla (f \circ \varphi^{-1}) (\varphi(x))
\end{multline*}
Now if we restrict $\varphi$ to a small ball we can assume -- see \cite{composition}
\[
 ||f \circ \varphi||_s \leq C ||f||_s \quad \mbox{and} \quad |\varphi(x)-\varphi(x-y)| \leq C |y|
\]
for all $f \in H^s$ and $\varphi$ in this ball. With that we get the same estimates 
\[
 ||R_{f,\varphi}(x,y)||_{H^{s-1}(dx)} \leq C ||f||_s |y|
\]
resp.
\[
 ||R_{f,\varphi}(x,y)||_{L^\infty(dx)} \leq C ||f||_s |y|^{1+\varepsilon}
\]
Using this notation the individual term in the integral looks like
\[
 \int_{\R^n} \frac{1}{\alpha!} \partial^\alpha K_1(y) (R_g(x,y)-dg(x) \cdot y)^\alpha R_{f,\varphi}(x,y) J_\varphi(x-y) \; dy
\]
Expanding the bracket we see that $2^{|\alpha|}-1$ terms appear with at least two $R$ terms and one with one $R$ term. Using \eqref{banach_algebra} and \eqref{k1_estimate} one can estimate the $H^{s-1}$ norm of the $2^{|\alpha|}-1$ integrals as
\[
 \int_{\R^n} \frac{|\alpha|!}{\alpha!} C^{|\alpha|} \frac{1}{|y|^{n+1+|\alpha|}} 2^{|\alpha|} e^{-|y|^2/2} C^{|\alpha|} |y|^{1+|\alpha|+\varepsilon} ||g||_s^{|\alpha|} ||f||_s (1+||g||_s^n) \; dy 
\]
Summing over $\alpha$ with $|\alpha|=k$ we have the bound
\[
 \tilde C^{k} ||g||_s^{k} ||f||_s (1+||g||_s^n) \int_{\R^n} \frac{1}{|y|^{n-\varepsilon}} e^{-|y|^2/2} \;dy
\]
which is the general term for a convergent series for $||g||_s$ small, i.e. for $\varphi$ near $\mbox{id}$. The remaining term we have to consider is
\[
 \int_{\R^n} \frac{1}{\alpha!} \partial^\alpha K_1(y) (-dg(x) \cdot y)^\alpha R_{f,\varphi}(x,y) J_\varphi(x-y) \; dy
\]
Expanding the bracket gives $n^{|\alpha|}$ terms of the form
\[
 (-1)^{|\alpha|} \partial_{k_1} g^{m_1}(x) \cdots \partial_{k_{|\alpha|}} g^{m_{|\alpha|}}(x) \int_{\R^n} \frac{1}{\alpha!} \partial^\alpha K_1(y) y_{k_1} \cdots y_{k_{|\alpha|}}  R_{f,\varphi}(x,y) J_\varphi(x-y) \; dy
\]
for certain $1 \leq k_1,\ldots,k_{|\alpha|},m_1,\ldots,m_{|\alpha|} \leq n$. Let us introduce $\tilde K_j(y) = \partial^\alpha K_j(y) y_{k_1} \cdots y_{k_{|\alpha|}}, j=1,2$ and $\tilde K=\tilde K_1 + \tilde K_2$. So we have to examine after a change of variables
\[
 \int_{\R^n} \tilde K_1(x-y) (f(x)-f(y)+(\varphi(x)-\varphi(y))^\top \cdot [d\varphi^{-1}(x)]^\top \cdot \nabla f(x)) J_\varphi(y) \;dy
\]
We claim that 
\[
  \mbox{p.v. } \int_{\R^n} \tilde K_1(x-y) (f(x)-f(y)) J_\varphi(y) \;dy
\]
resp.
\[
 \mbox{p.v. } \int_{\R^n} \tilde K_1(x-y) ((\varphi(x)-\varphi(y))^\top \cdot [d\varphi^{-1}(x)]^\top \cdot \nabla f(x)) J_\varphi(y) \;dy
\]
exist seperately. Let's consider the first one. By \eqref{k2_estimate} we see from the considerations regarding $\mathcal K_2(\varphi)$ that
\[
 ||\int_{\R^n} \tilde K_2(x-y) (f(x)-f(y)) J_\varphi(y) \;dy||_{H^{s-1}(dx)} \leq |\alpha|! C^{|\alpha|} ||f||_s (1+||g||_s^n) 
\]
holds. Therefore its enough to consider
\[
  \mbox{p.v. } \int_{\R^n} \tilde K(x-y) (f(x)-f(y)) J_\varphi(y) \;dy
\]
But as $J_\varphi(y)=1+j(y)$ with some $j \in H^{s-1}$ (note that $\varphi(y)=y+g(y)$) we can apply Lemma \ref{first_order_kernel} for $\tilde K$ to get
\[
 ||\int_{\R^n} \tilde K(x-y) (f(x)-f(y)) J_\varphi(y) \;dy||_{H^{s-1}(dx)} \leq |\alpha|! C^{|\alpha|} ||f||_s (1+||g||_s^n) 
\]
We split the second principal value integral into
\[
 \left(\mbox{p.v. } \int_{\R^n} \tilde K_1(x-y) (x-y)^\top  J_\varphi(y) \;dy \right) \cdot [d\varphi^{-1}(x)]^\top \cdot \nabla f(x)
\] 
and
\[
 \left(\mbox{p.v. } \int_{\R^n} \tilde K_1(x-y) (g(x)-g(y))^\top  J_\varphi(y) \;dy \right) \cdot [d\varphi^{-1}(x)]^\top \cdot \nabla f(x)
\]
One can handle the second part exactly as above, now $f$ replaced by $g$. To handle the first part note that by the Leibniz rule the only term  which is critical is the one where all the derivatives fall on the $1/|y|^{n+1}$ term in the expression
\[
\tilde k(y):= \partial^\alpha  (\frac{1}{|y|^{n+1}}) e^{-|y|^2} y_{k_1} \cdots y_{k_{|\alpha|}} y
\]
The others are integrable and can be estimated alltogether by
\[
 C^{|\alpha|} |\alpha|! (1+||g||_s^n) \int_{\R^n} \frac{1}{|y|^{n-1}} e^{-|y|^2/2}\; dy
\]
So the only remaining integral is
\[
 \mbox{p.v. } \int_{\R^n} (\partial^\alpha_x  \frac{1}{|x-y|^{n+1}} e^{-|x-y|^2})(x-y)_{k_1} \cdots (x-y)_{k_{|\alpha|}} (x-y) (1+j(y)) \; dy
\]
But from \cite{chemin} we know that for any sphere $S$ around $0$ we have
\[
 \int_{S} \partial^\alpha  \frac{1}{|y|^{n+1}} y_{k_1} \cdots y_{k_{|\alpha|}} y
\; dS(y) = 0
\] 
Therefore the $1$ term in $1+j(y)$ vanishes. Finally consider
\[
 \mbox{p.v. } \int_{\R^n} (\partial^\alpha  \frac{1}{|x-y|^{n+1}}) e^{-|x-y|^2}(x-y)_{k_1} \cdots (x-y)_{k_{|\alpha|}} (x-y) j(y) \; dy
\]
The Fourier transform of the kernel above is 
\[
 \mathcal F[\tilde k(y)] \ast F[e^{-|y|^2}] (\xi) = \int_{\R^n} \mathcal F[\tilde k](\xi-\eta) e^{-|\eta|^2} \;d\eta 
\]
which by the calculations of \ref{first_order_kernel} is seen to be bounded by
\[
 |\mathcal F[\tilde k(y)] \ast F[e^{-|y|^4}] (\xi)| \leq C^{|\alpha|} |\alpha|!
\]
Thus the principal value integral is just a Fourier multiplicator operator with a bounded multiplier acting on $j(y)$. Thus
\begin{multline*}
 || \mbox{p.v. } \int_{\R^n} (\partial^\alpha  \frac{1}{|x-y|^{n+1}}) e^{-|x-y|^2}(x-y)_{k_1} \cdots (x-y)_{k_{|\alpha|}} (x-y) j(y) \; dy||_{H^{s-1}(dx)} \\
\leq C^{|\alpha|} |\alpha|! ||j||_{s-1} \leq C^{|\alpha|} |\alpha|! ||g||_s^n
\end{multline*}
So far we have proved that
\[
 \Ds^s(\R^n) \to L(H^s(\R^n);H^{s-1}(\R^n)),\quad \varphi \mapsto R_\varphi \Lambda R_\varphi^{-1}
\]
is analytic around the identity map $\mbox{id}$, i.e. we have a power series expansion in $g=\varphi - \mbox{id}$
\[
 R_\varphi \Lambda R_\varphi^{-1} = \sum_{k \geq 0} P_k(g,\ldots,g)
\]
where $P_k$ is a continuous homogeneous polynomial of degree $k$ with values in $L(H^s(\R^n);H^{s-1}(\R^n))$. The series has a radius of convergence $R > 0$ which means
\[
 \sup_{k \geq 0} ||P_k|| r^k < \infty,\quad \forall 0 \leq r < R
\]
where $||P_k||$ is the norm given by
\[
 \sup_{||g||_s \leq 1,||f||_s \leq 1} ||P_k(g,\ldots,g)(f)||_{s-1}
\]
We have to prove that $R_\varphi \Lambda R_\varphi^{-1}$ is analytic around any $\varphi_\bullet \in \Ds^s(\R^n)$. We do the calculations first in the smooth category (e.g. $H^\infty$). Taking the derivative at $\varphi_\bullet$ in direction of $g$ we get
\[ 
 R_{\varphi_\bullet} [g \circ \varphi_\bullet^{-1},\Lambda] \nabla (f \circ \varphi_\bullet^{-1})=R_{\varphi_\bullet} P_1(g \circ \varphi_\bullet^{-1})(f \circ \varphi_\bullet^{-1})
\]
Similarly the higher derivatives look like
\[
 k! R_{\varphi_\bullet} P_k(g \circ \varphi_\bullet^{-1},\ldots,g \circ \varphi_\bullet^{-1})(f \circ \varphi_\bullet^{-1})
\]
These are polynomials which can be extended continuously to all $g,f \in H^s$. Now we have in the smooth category for $\varphi=\varphi_\bullet + g$ the identity\[
 R_\varphi \Lambda R_\varphi^{-1}f - R_{\varphi_\bullet} \Lambda R_{\varphi_\bullet}^{-1}f = \int_0^t R_{\varphi_\tau} P_1(g \circ \varphi_\tau^{-1})(f \circ \varphi_\tau^{-1})\; d\tau
\]
where $\varphi_\tau=\varphi_\bullet + \tau g, 0 \leq \tau \leq 1$. By continuity we can extend this to all $g,f \in H^s$ and $\varphi,\varphi_\bullet \in \Ds^s$. So one can conclude that $\varphi \mapsto R_\varphi \Lambda R_\varphi^{-1}f$ is $C^1$ with derivative $R_\varphi P_1(g \circ \varphi^{-1})(f \circ \varphi^{-1})$. Inductively one then shows that it is actually $C^\infty$ with the corresponding derivatives. That $\varphi \mapsto R_\varphi \Lambda R_\varphi^{-1}$ is smooth follows now from general principles -- see \cite{lang}. Choosing $C > 0$ with $||h \circ \varphi_\bullet||_s \leq C ||h||_s, ||h \circ \varphi_\bullet^{-1}||_s \leq C ||h||_s$ for all $h \in H^s$ gives
\[
 ||R_{\varphi_\bullet} P_k(g \circ \varphi_\bullet^{-1},\ldots,g \circ \varphi_\bullet^{-1})(f \circ \varphi_\bullet^{-1})||_{s-1} \leq C^{k+2} ||P_k|| ||g||_s^k ||f||_s 
\] 
This shows the convergence of the Taylor series. Thus $\varphi \mapsto R_\varphi \Lambda R_\varphi^{-1}$ is analytic.
\end{proof}

\begin{Lemma}\label{first_order_kernel}
Let $K$ be the function
\[
 K:y=(y_1,\ldots,y_n) \mapsto (\partial^\alpha_y \frac{1}{|y|^{n+1}}) y^\beta
\]
with $\alpha, \beta \in \N^n$, $|\alpha|=|\beta|=k$. There is $C>0$ independent of $k$ with \[
 ||\mbox{p.v. } \int_{\R^n} K(x-y) (f(x)-f(y)) \;dy ||_{H^{s-1}(dx)} \leq k! C^k ||f||_s
\]
and
\[
 ||\mbox{p.v. } \int_{\R^n} K(x-y) (f(x)-f(y)) g(y) \;dy ||_{H^{s-1}(dx)} \leq k! C^k ||f||_s ||g||_{s-1}
\]
for all $f \in H^s(\R^n)$, $g \in H^{s-1}(\R^n)$.
\end{Lemma}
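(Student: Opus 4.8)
The plan is to recognize that the operator
\[
 T_K f(x) := \mbox{p.v.}\int_{\Rn} K(x-y)(f(x)-f(y))\;dy
\]
is a Fourier multiplier and to reduce both inequalities to symbol bounds. First I would record the structure of the kernel. Since $\partial^\alpha |y|^{-(n+1)}$ is homogeneous of degree $-(n+1)-|\alpha|$ and $y^\beta$ of degree $|\beta|$, the kernel $K$ is homogeneous of degree $-(n+1)-k+k=-(n+1)$; moreover $|y|^{-(n+1)}$ is even and $|\alpha|+|\beta|=2k$ is even, so $K$ is an even function. Cauchy estimates for the real-analytic, homogeneous function $|y|^{-(n+1)}$ give $|\partial^\alpha |y|^{-(n+1)}|\le C^{|\alpha|}|\alpha|!\,|y|^{-(n+1)-|\alpha|}$, hence
\[
 |K(y)|\le C^k\,k!\,|y|^{-(n+1)},\qquad y\neq 0.
\]

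For the first inequality I would compute the symbol of $T_K$. Taking the Fourier transform in $x$ yields $\widehat{T_K f}(\xi)=m_K(\xi)\hat f(\xi)$ with $m_K(\xi)=\int_{\Rn} K(w)(1-e^{-iw\cdot\xi})\,dw$. As $K$ is even the imaginary part vanishes, so $m_K(\xi)=\int K(w)(1-\cos(w\cdot\xi))\,dw$, which converges absolutely: on $|w|\le 1$ one bounds $1-\cos(w\cdot\xi)\le\tfrac12|w|^2|\xi|^2$ against $|w|^{-(n+1)}$, gaining two powers so that $|w|^{-(n-1)}$ is integrable at the origin, and on $|w|>1$ one uses $1-\cos\le 2$ against the integrable tail $|w|^{-(n+1)}$. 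A scaling $w\mapsto w/|\xi|$ shows $m_K$ is homogeneous of degree $1$, and the same splitting gives $\sup_{|\omega|=1}|m_K(\omega)|\le C^k k!$, hence $|m_K(\xi)|\le C^k k!\,|\xi|$. Since a multiplier with symbol $m_K$ has $H^s\to H^{s-1}$ norm equal to $\sup_\xi |m_K(\xi)|(1+|\xi|^2)^{-1/2}$, this is exactly the first estimate.

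For the second inequality I would pass to a bilinear symbol. Writing $f,g$ through their Fourier transforms and integrating in $y$, the operator $S_g f(x):=\mbox{p.v.}\int K(x-y)(f(x)-f(y))g(y)\,dy$ satisfies, up to harmless constants,
\[
 \widehat{S_g f}(\zeta)=\int_{\Rn}\hat f(\zeta-\eta)\,\hat g(\eta)\,\sigma(\zeta-\eta,\eta)\;d\eta,\qquad \sigma(\xi,\eta)=m_K(\xi+\eta)-m_K(\eta),
\]
the identity for $\sigma$ following by adding and subtracting $1$ inside each exponential so that two convergent $m_K$-integrals appear. The key is the first-order bound $|\sigma(\xi,\eta)|\le L|\xi|$ with $L=\operatorname{Lip}(m_K)$. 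Granting it, $|\widehat{S_g f}(\zeta)|\le L\int (1+|\zeta-\eta|^2)^{1/2}|\hat f(\zeta-\eta)|\,|\hat g(\eta)|\,d\eta=L\,\widehat{FG}(\zeta)$, where $F,G\ge0$ have $\hat F=(1+|\cdot|^2)^{1/2}|\hat f|$ and $\hat G=|\hat g|$, so that $\|F\|_{s-1}=\|f\|_s$ and $\|G\|_{s-1}=\|g\|_{s-1}$. Since $s-1>n/2$, the algebra property of $H^{s-1}$ then gives $\|S_g f\|_{s-1}\le L\|FG\|_{s-1}\le C\,L\,\|f\|_s\|g\|_{s-1}$, as desired.

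It remains to bound $L=\operatorname{Lip}(m_K)$ by $C^k k!$, and this I expect to be the main obstacle. As $m_K$ is homogeneous of degree $1$, its Lipschitz constant is controlled by $\sup_\xi|\nabla m_K(\xi)|$, and differentiation under the integral gives $\partial_{\xi_j} m_K(\xi)=i\int \tilde K_j(w)e^{-iw\cdot\xi}\,dw$ with $\tilde K_j(w)=K(w)w_j$. Now $\tilde K_j$ is homogeneous of degree exactly $-n$ and, being odd, has vanishing mean on spheres, so $\partial_{\xi_j} m_K$ is (up to the factor $i$) the Fourier transform of a Calder\'on--Zygmund kernel, bounded by the $L^1(S^{n-1})$-norm of its angular profile. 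This borderline homogeneity $-n$ is precisely the delicate point: unlike the absolutely convergent $(1-\cos)$ integral of the first step, here no integrable majorant exists and one must exploit the odd symmetry (the cancellation condition) directly. Tracking the factor $C^k k!$ through the classical formula for the symbol of a homogeneous singular integral yields $\sup_\xi|\nabla m_K(\xi)|\le C^k k!$, which closes the argument.
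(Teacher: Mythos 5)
Your proof is correct, and its skeleton is the same as the paper's: both inequalities are reduced to bounds on a Fourier symbol --- $|m_K(\xi)|\le k!\,C^k|\xi|$ for the first, and the Lipschitz bound $|m_K(\zeta)-m_K(\eta)|\le k!\,C^k|\zeta-\eta|$ for the second, the latter fed into exactly the bilinear formula you write (your $\sigma(\zeta-\eta,\eta)=m_K(\zeta)-m_K(\eta)$ is the paper's Fourier-side expression for the commutator $[f,K(D)]g$) and closed with the algebra property of $H^{s-1}$. Where you genuinely differ is in how the symbol bounds are obtained. The paper computes the symbol in closed form: since $K=(\partial^\alpha|y|^{-(n+1)})\,y^\beta$ and $|y|^{-(n+1)}$ is (up to a constant) the kernel of $\Lambda$, the Fourier calculus gives $\hat K(\xi)=(-1)^k\partial_\xi^\beta(\xi^\alpha|\xi|)$, after which both $|\hat K(\xi)|\le k!\,C^k|\xi|$ and $|\nabla_\xi\hat K(\xi)|\le k!\,C^k$ are Cauchy estimates for an explicit real-analytic function homogeneous of degree $1$, resp.\ $0$, by the same derivation as \eqref{k1_estimate} and \eqref{k2_estimate}. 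You instead take a real-variable route: the L\'evy--Khintchine-type representation $m_K(\xi)=\int K(w)(1-\cos(w\cdot\xi))\,dw$ (valid because $K$ is even), the size bound by splitting $|w|\le 1$ and $|w|>1$, and, for the gradient, the identification of $\partial_{\xi_j}m_K$ with the Fourier transform of the odd, homogeneous degree $-n$ kernel $K(w)w_j$, bounded via the classical symbol formula for homogeneous singular integrals. Your way avoids the distributional identity behind $\widehat{\partial^\alpha|y|^{-(n+1)}}=c\,i^{|\alpha|}\xi^\alpha|\xi|$, but pays with Calder\'on--Zygmund machinery at the borderline homogeneity, which the paper's closed-form symbol bypasses entirely. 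Two points you should still nail down, neither fatal: (i) differentiating $m_K$ under the integral sign produces an integral that converges only conditionally at infinity, so it must be justified by truncating to $|w|<R$ and showing the differentiated truncations converge uniformly on compact subsets of $\{\xi\neq 0\}$; (ii) the bound by the $L^1(S^{n-1})$-norm of the angular profile is a feature of \emph{odd} kernels (in the general mean-zero formula the profile is integrated against $\log\frac{1}{|\omega\cdot\xi'|}$, which only cancels against an odd profile) --- your kernel $K(w)w_j$ is indeed odd since $K$ is even, and in any case your Cauchy estimate gives the stronger $L^\infty(S^{n-1})$ control by $C^k k!$, so the step is sound.
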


\begin{proof}
The Fourier Transform of $K$ is given by
\[
 \hat K(\xi)=\frac{1}{(-i)^{|\beta|}} \partial_\xi^\beta \widehat{\partial_y^\alpha \frac{1}{|y|^{n+1}}} (\xi)= \frac{i^{|\alpha|}}{(-i)^{|\beta|}} \partial_\xi^\beta (\xi^\alpha |\xi|)= (-1)^k \partial_\xi^\beta (\xi^\alpha |\xi|)
\]
Thus one has with the same derivation as for \eqref{k1_estimate} and \eqref{k2_estimate}
\begin{equation}\label{K_estimate}
 |\hat K(\xi)| \leq k! C^k |\xi|
\end{equation}
Adjusting $C$ one has in a similar fashion
\[
 |\nabla_\xi \hat K(\xi)| \leq k! C^k
\]
This implies in particular $|K(\xi)-K(\eta)| \leq k! C^k |\xi-\eta|$ which will be used later. Now note that the first principal value integral is nothing other than the Fourier multiplier $K(D)$ acting on $f$. Thus using \eqref{K_estimate} we can estimate this integral by
\[
 ||K(D)f||_{s-1} = ||(1+|\xi|^2)^{(s-1)/2} \hat K(\xi) \hat f(\xi)||_{L^2} \leq k! C^k ||f||_s
\]
The second integral is equal to the commutator $[f,K(D)]g$ which can be seen using the integral representation of $K(D)$ above. Taking the Fourier transform we get
\[
 \mathcal F[ [f,K(D)]g ](\xi) = \mathcal F[f \cdot K(D)g](\xi) - K(\xi) \mathcal F[fg](\xi) = \int_{\R^n} f(\xi - \eta) (K(\eta)-K(\xi)) \hat g(\eta) \;d\eta    
\]
Thus using $|K(\eta)-K(\xi)|\leq k!C^k |\xi-\eta|$ we can bound
\[
 |\mathcal F[ [f,K(D)]g ](\xi)| \leq k!C^k |\hat{f'}| \ast |\hat{g}| (\xi)
\]
where $f'$ is defined by $\hat f'(\xi)=|\xi| \hat f(\xi)$. Taking the inverse Fourier transform we see that this convolution is a multiplication of $H^{s-1}$ functions. Therefore we can bound the second principal value integral by
\[
 ||[f,K(D)]g||_{s-1} \leq C k! C^k ||f||_s ||g||_{s-1}
\] 
which is the desired result after adjusting $C$.
\end{proof}

\begin{Coro}\label{coro_riesz}
For $1 \leq k \leq n$ the map 
\[
 \Ds^s(\R^n) \to L(H^s(\R^n);H^s(\R^n)),\quad \varphi \mapsto [f \mapsto R_\varphi \mathcal R_k R_\varphi^{-1} f]
\]
is real analytic.
\end{Coro}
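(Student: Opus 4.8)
The plan is to treat the Riesz transform as a Calderón–Zygmund singular integral and to mirror, essentially verbatim, the kernel analysis carried out for $\Lambda$ in Lemma \ref{lemma_laplacian}, now drawing the decisive estimate from Lemma \ref{first_order_kernel}. Recall that $\mathcal R_k$ is convolution with the kernel $K^{(k)}(z)=c_n\, z_k/|z|^{n+1}$. After the substitution $y\mapsto\varphi(y)$ one obtains the representation
\[
 R_\varphi \mathcal R_k R_\varphi^{-1} f(x) = \mbox{p.v.}\int_{\R^n} K^{(k)}(\varphi(x)-\varphi(y))\, f(y)\, J_\varphi(y)\; dy,
\]
with $J_\varphi$ the Jacobian determinant. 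Writing $\varphi=\mbox{id}+g$ and expanding $K^{(k)}((x-y)+(g(x)-g(y)))$ into its Taylor series exactly as for $\mathcal K_1(\varphi)$ and $\mathcal K_2(\varphi)$ produces, after differentiating $K^{(k)}$ via Leibniz, kernels of the form $(\partial^\alpha |z|^{-(n+1)})\,z^\beta$, i.e.\ precisely the kernels covered by Lemma \ref{first_order_kernel}.

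The steps then follow the template of Lemma \ref{lemma_laplacian}. First I would regularize $K^{(k)}$ by the $e^{-|z|^2}$ splitting into a short-range and a long-range piece, so that the long-range piece is harmless and the short-range piece carries the singularity. Next I would Taylor-expand in $g$ and, in each monomial $(g(x)-g(y))^\gamma$, substitute $g(x)-g(y)=R_g(x,y)-dg(x)\cdot(x-y)$; the summands containing at least one factor $R_g$ acquire the extra decay $|z|^{1+\varepsilon}$ from $\|R_g(\cdot,y)\|_{L^\infty}\le C\|g\|_s|y|^{1+\varepsilon}$ and are estimated by absolutely convergent integrals, giving a geometric bound $\tilde C^{\,k}\|g\|_s^{\,k}\|f\|_s(1+\|g\|_s^n)$. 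The genuinely singular summands, in which every factor equals $-dg(x)\cdot(x-y)$, are the principal-value integrals to which Lemma \ref{first_order_kernel} applies, yielding the factorially-controlled bound $k!\,C^k$ that is tamed by the Taylor coefficient $1/\gamma!$. Summing over $|\gamma|=k$ and then over $k$ produces a convergent power series in $g$, establishing analyticity near $\mbox{id}$. Finally I would pass from $\mbox{id}$ to an arbitrary $\varphi_\bullet\in\Ds^s(\R^n)$ by the conjugation argument used at the end of the proof of Lemma \ref{lemma_laplacian}, writing $R_\varphi\mathcal R_k R_\varphi^{-1}=R_{\varphi_\bullet}\big(R_{\psi}\mathcal R_k R_{\psi}^{-1}\big)R_{\varphi_\bullet}^{-1}$ with $\psi=\varphi\circ\varphi_\bullet^{-1}$ near $\mbox{id}$ and invoking the composition estimates of \cite{composition}.

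The main obstacle is the difference in order: $\mathcal R_k$ has order $0$, so the target is $H^s$ with \emph{no} loss of derivative, while the representation integrates $f(y)$ directly rather than the difference $f(x)-f(y)$ that made the $\Lambda$-kernel tractable. Concretely, the $\gamma=0$ term already yields $\mathcal R_k f + \mathcal R_k(f\cdot j)$ with $j=J_\varphi-1\in H^{s-1}$, and $\mathcal R_k(f j)$ a priori lies only in $H^{s-1}$. The resolution is to exploit the \emph{oddness} of $K^{(k)}$: since the principal value of $K^{(k)}$ over symmetric regions vanishes, one may subtract $f(x)$ and rewrite the singular contributions as commutators of a multiplication operator with the order-$1$ multiplier of Lemma \ref{first_order_kernel}; the commutator estimate $\|[f,K(D)]g\|_{s-1}\le k!\,C^k\|f\|_s\|g\|_{s-1}$ then recovers the missing derivative and delivers the $H^s\to H^s$ bound without loss. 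Arranging the expansion so that every dangerous term appears in this commutator form, with constants still summing to a convergent series, is the heart of the argument. I note in passing that the seemingly quicker route through $\mathcal R_k=\partial_k\Lambda^{-1}$ fails, since $\Lambda^{-1}$ is unbounded $H^{s-1}\to H^s$ at low frequency; $\mathcal R_k$ cannot be split and must be handled as a single Calderón–Zygmund operator.
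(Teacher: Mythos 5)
Your plan diverges from the paper's proof at the very first step, and the divergence is where it breaks. The paper never expands the Riesz kernel at all: it splits $\mathcal R_k = \chi(D)\mathcal R_k + (1-\chi(D))\mathcal R_k$ with $\chi(D)$ a low-frequency cutoff, treats the smoothing piece $\chi(D)\mathcal R_k$ by the results of \cite{lagrangian}, and factors the high-frequency piece as $(1-\chi(D))\partial_k\Lambda^{-1}$. The conjugation of $\partial_k$ is the explicit analytic expression $df\,[d\varphi]^{-1}$, and the conjugation of $(1-\chi(D))\Lambda^{-1}$ is obtained from Lemma \ref{lemma_laplacian} by inverting, via Neumann series, the analytic operator-valued map $\varphi \mapsto R_\varphi\bigl(\chi(D)+(1-\chi(D))\Lambda\bigr)R_\varphi^{-1}$. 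Note that this is exactly the route you dismiss in your last sentence: the low-frequency unboundedness of $\Lambda^{-1}$ that you cite as the fatal obstruction is precisely what the cutoff removes, since $(1-\chi(D))\Lambda^{-1}$ is bounded $H^{s-1}\to H^s$ and $\chi(D)+(1-\chi(D))\Lambda$ is invertible. So the factorization route does work, and it is the paper's actual argument.

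The gap in your own route is the derivative count, which you correctly identify as the main obstacle but do not overcome. Every estimate you propose to borrow --- the $R_g$ bounds, the absolutely convergent terms with the extra $|y|^{1+\varepsilon}$ decay, and both conclusions of Lemma \ref{first_order_kernel} --- controls the $H^{s-1}$ norm of the output. That is exactly right for the order-one operator $\Lambda$, whose conjugation is analytic into $L(H^s;H^{s-1})$, and exactly one derivative short for the order-zero operator $\mathcal R_k$, whose conjugation must be analytic into $L(H^s;H^s)$. In particular, the commutator estimate $\|[f,K(D)]g\|_{s-1}\le k!\,C^k\|f\|_s\|g\|_{s-1}$ has output in $H^{s-1}$; it cannot ``deliver the $H^s\to H^s$ bound'' as you claim. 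The difficulty is genuine: already the term of your expansion homogeneous of degree one in $g$ is $[g\cdot\nabla,\mathcal R_k]f$, which arises as the sum of two pieces (the $|\gamma|=1$ term against $\nabla K^{(k)}$, and the $\gamma=0$ term against the linear part $\operatorname{div}g$ of $J_\varphi$), each of which individually lies only in $H^{s-1}$; that their sum lies in $H^s$ is a Calder\'on-commutator type cancellation, and to run your scheme one would need such cancellations for \emph{every} homogeneous term, with $H^s$ bounds and factorial constants --- estimates provided neither by Lemma \ref{lemma_laplacian} nor by Lemma \ref{first_order_kernel}. (Indeed, in the paper's logic the $H^s$-boundedness of these commutators is a \emph{consequence} of the corollary, obtained by differentiating the analytic map, not an ingredient of its proof.) Your proposal names this regrouping as ``the heart of the argument'' and leaves it unproven, so the proof is incomplete precisely at its decisive step.
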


\begin{proof}
Denote by $\chi(\xi)$ the indicator function of the unit ball in $\R^n$ and by $\chi(D)$ the corresponding Fourier multiplier. We write
\[
 \mathcal R_k = \chi(D) \mathcal R_k + (1-\chi(D)) \mathcal R_k
\]
We consider these two parts seperately. First consider
\[
 R_\varphi (1-\chi(D)) \mathcal R_k R_\varphi^{-1} = R_\varphi (1-\chi(D)) \partial_k \Lambda^{-1} R_\varphi^{-1}
\]
We claim that 
\[
 \varphi \mapsto R_\varphi (\chi(D) + (1-\chi(D)) \Lambda) R_\varphi^{-1}
\]
is real analytic. For the analyticity of $\varphi \mapsto R_\varphi \chi(D) R_\varphi^{-1}$ one can consult \cite{lagrangian}. So one concludes by Lemma \ref{lemma_laplacian} the analyticity of
\[
 \varphi \mapsto R_\varphi \chi(D) R_\varphi^{-1} + R_\varphi (1-\chi(D)) R_\varphi^{-1} R_\varphi \Lambda R_\varphi^{-1}
\]
as a map $\Ds^s(\R^n) \to L(H^s(\R^n);H^{s-1}(\R^n))$. As inversion is an analytic process (see Neumann series) one has also by taking the inverse in $L(H^s(\R^n);H^{s-1}(\R^n))$ that
\[
 \varphi \mapsto R_\varphi \chi(D) R_\varphi^{-1} + R_\varphi (1-\chi(D)) R_\varphi^{-1} R_\varphi \Lambda^{-1} R_\varphi^{-1}
\]
is real analytic. In particular
\[
 \varphi \mapsto R_\varphi (1-\chi(D)) \Lambda^{-1} R_\varphi^{-1}
\]
is real analytic. Further we have
\[
 R_\varphi \partial_k R_\varphi^{-1} f = df [d\varphi]^{-1}
\]
which is a polynomial expression in the first derivatives of $\varphi$ divided by $\det(d\varphi)$ hence analytic in $\varphi$ -- see \cite{lagrangian} for the division by $\det(d\varphi)$. Thus 
\[
 R_\varphi (1-\chi(D)) \mathcal R_k R_\varphi^{-1} = R_\varphi \partial_k R_\varphi^{-1} R_\varphi (1-\chi(D)) \Lambda^{-1} R_\varphi^{-1}
\]
is real analytic in $\varphi$. Now consider the first part of the splitting of $\mathcal R_k$. This is treated in \cite{lagrangian}. There it is shown that expressions of the form
\[
 R_\varphi \chi(D) \mathcal R_k \mathcal R_j R_\varphi^{-1}
\]
are analytic in $\varphi$. In the same manner it follows that
\[
 R_\varphi \chi(D) \mathcal R_k R_\varphi^{-1}
\]
is real analytic in $\varphi$. This concludes the proof.
\end{proof}

Finally we can give the proof of Proposition \ref{prop_analytic}

\begin{proof}[Proof of Proposition \ref{prop_analytic}]
Consider the map
\[
 \varphi \mapsto R_\varphi \mathcal R_k R_\varphi^{-1} f
\]
which by Corollary \ref{coro_riesz} is analytic. So is its derivative. We take the derivative in direction $w \in H^s(\R^n;\R^n)$ and get
\[
 R_\varphi \left(\mathcal R_k d(f \circ \varphi^{-1})) w \circ \varphi^{-1}\right) - R_\varphi \left(\mathcal R_k [d(f \circ \varphi^{-1}) w \circ \varphi^{-1}]\right)  
\]
or using the commutator notation 
\[
 R_\varphi [\mathcal R_k,(w \circ \varphi^{-1}) \cdot \nabla] (f \circ \varphi^{-1})
\]
If we plug in the analytic expression $R_\varphi \mathcal R_j R_\varphi^{-1} g$ for $f$ we see that the expressions appearing in $B(v \circ \varphi^{-1},v \circ \varphi^{-1}) \circ \varphi$ are analytic expressions of $\varphi$ which proves the proposition. 
\end{proof}

\bibliographystyle{plain}

\flushleft
\author{ Hasan Inci\\
EPFL SB MATHAA PDE \\
MA C1 627 (B\^atiment MA)\\ 
Station 8 \\
CH-1015 Lausanne\\
Schwitzerland\\
        {\it email: } {hasan.inci@epfl.ch}
}

\end{document}